\title{ Semistability and Simple Connectivity at Infinity of Finitely Generated Groups with a Finite Series of Commensurated Subgroups}
\author{Michael Mihalik}
\newtheorem{theorem}{Theorem}[section]
\newtheorem{lemma}[theorem]{Lemma}
\newcounter{remarknum}
\newenvironment{remark}{\addvspace{12pt}\refstepcounter{remarknum}
\noindent{\bf Remark \arabic{remarknum}.}}{\par\addvspace{12pt}}
\newcounter{claimnum}
\newenvironment{claim}{\addvspace{12pt}\refstepcounter{claimnum}
\noindent{\bf Claim \arabic{claimnum}.}}{\par\addvspace{12pt}}
\newcounter{definitionnum}
\newenvironment{definition}{\addvspace{12pt}\refstepcounter{definitionnum}
\noindent{\bf Definition \arabic{definitionnum}.}}{\par\addvspace{12pt}}
\newenvironment{proof}{\addvspace{12pt}\noindent{\bf Proof:}}{
$\Box$\par\addvspace{12pt}}
\newcounter{examplenum}
\newenvironment{example}{\addvspace{12pt}\refstepcounter{examplenum}
\noindent{\bf Example \arabic{examplenum}.}}{\par\addvspace{12pt}}
\date{November 3, 2014}
\begin{document}  
\maketitle
\begin{abstract} 
 A subgroup $H$ of a group $G$ is {\it commensurated} in $G$ if for each $g\in G$, $gHg^{-1}\cap H$ has finite index in both $H$ and $gHg^{-1}$. If there is a sequence of subgroups $H=Q_0\prec Q_1\prec \cdots \prec Q_{k}\prec Q_{k+1}=G$
 where $Q_i$ is commensurated in $Q_{i+1}$ for all $i$, then $Q_0$ is {\it subcommensurated} in $G$. In this paper we introduce the notion of  the simple connectivity at infinity of a finitely generated group (in analogy with that for finitely presented groups).  Our main result is: If a finitely generated group $G$ contains an infinite, finitely generated, subcommensurated  subgroup $H$, of infinite index in $G$, then $G$ is 1-ended and semistable at $\infty$. If additionally, $H$ is finitely presented and 1-ended, then $G$ is simply connected at 
 $\infty$. A normal subgroup of a group is commensurated, so this result is a strict generalization of a number of results, including the main theorems of G. Conner and M. Mihalik \cite{CM}, B. Jackson \cite{J},  V. M. Lew \cite{L}, M. Mihalik \cite{M1}and \cite{M2}, and J. Profio \cite{P}.
\end{abstract}

\section{Introduction and Background}

In 1962, J. Stallings defined what it means for a space to be $n$-connected at $\infty$, and proved the following:

\begin{theorem} (J. Stallings \cite{St}) If $V^n$, $n\geq 5$, is a contractible PL $n$-manifold without boundary, then $V$ is PL-homeomorphic to $R^n$ if and only if $V$ is simply connected at $\infty$. 
\end{theorem}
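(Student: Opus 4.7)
The plan is to treat the two directions separately. The forward direction is essentially trivial: if $V$ is PL-homeomorphic to $\mathbb{R}^n$ with $n\geq 5$, then exhausting by concentric PL balls $B_1\subset B_2\subset\cdots$ shows $V$ is simply connected at $\infty$, since each complement $V\setminus B_i$ deformation retracts onto an $(n-1)$-sphere, which is simply connected whenever $n-1\geq 2$.

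The content lies in the reverse direction. My strategy is to exhibit $V$ as a monotone union of open PL $n$-cells
$U_1\subset\overline{U_1}\subset U_2\subset\overline{U_2}\subset\cdots$
with $\bigcup_i U_i=V$, and then invoke the classical fact (M.~Brown in the topological category, with a PL version available in this dimension range) that a monotone union of open $n$-cells with closure-containment is itself PL-homeomorphic to $\mathbb{R}^n$. This reduces the theorem to the construction of the $U_i$.

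To build the $U_i$, the essential tool is Stallings' engulfing theorem: in a PL $n$-manifold $M$, a compact subpolyhedron $P$ of dimension at most $n-3$ can be engulfed into a given open set $U$ by a compactly supported ambient PL isotopy, provided the pair $(M,U)$ is sufficiently highly connected. I would start from a small open PL $n$-cell $U_1$, fix a compact exhaustion $\{K_i\}$ of $V$, and inductively enlarge $U_i$ by engulfing a collapsible spine of a regular neighborhood of $K_i\cup\overline{U_i}$ to produce $U_{i+1}$. Contractibility of $V$ verifies the required $\pi_0$- and $\pi_1$-connectivity conditions of the pair $(V,U_i)$ globally, while simple connectivity at infinity supplies the analogous conditions asymptotically, allowing 2-disks that cap loops near the end to be pushed off arbitrarily large compacta. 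The dimension hypothesis $n\geq 5$ enters precisely here: the obstructing spines have dimension at most $2$, hence codimension at least $3$, which is the engulfing threshold.

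The main obstacle I anticipate is executing the engulfing step cleanly and iteratively: at each stage one must produce an open PL $n$-cell whose closure is still contained in the next one, while arranging that the union of the cells exhausts $V$. This demands careful bookkeeping with regular neighborhoods, compact supports of the isotopies, and the interplay between the internal connectivity conditions (from contractibility) and the asymptotic ones (from simple connectivity at $\infty$). Once the engulfing is in place, identifying the resulting monotone union with an open $n$-cell is the final, comparatively formal, step.
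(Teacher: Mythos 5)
This theorem appears in the paper only as quoted background from \cite{St}; the paper gives no proof of it, so there is nothing internal to compare against and your outline can only be measured against Stallings' original engulfing argument, which it is clearly modelled on. The forward direction and the overall reduction (realize $V$ as a monotone union of open PL $n$-cells with $\overline{U_i}\subset U_{i+1}$ and apply the monotone-union theorem, which Stallings establishes in the PL category) are sound.

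The gap is in the engulfing step itself. You propose to enlarge $U_i$ by engulfing ``a collapsible spine of a regular neighborhood of $K_i\cup\overline{U_i}$'' and assert that ``the obstructing spines have dimension at most $2$.'' Neither claim is available: a regular neighborhood of an arbitrary compactum in an $n$-manifold has a spine of dimension as large as $n-1$, and even a contractible compact polyhedron need not be collapsible; so the polyhedron you want to engulf violates the codimension-$\geq 3$ hypothesis of the engulfing theorem and the inductive step fails as stated. Stallings' actual device is two-sided: triangulate a compact polyhedral neighborhood $N$ of the given compactum, split it into its $2$-skeleton $P$ (dimension $2\leq n-3$) and the dual $(n-3)$-skeleton $Q$ (codimension $3$); engulf $P$ into the open cell $U$ using contractibility of $V$ (the pair $(V,U)$ is highly connected since both are contractible); engulf $Q$ into the complement $W$ of a large compact set, which requires the pair $(V,W)$ to be $(n-3)$-connected --- here simple connectivity at infinity must be upgraded, via Poincar\'e duality and Hurewicz in the contractible manifold $V$, to the higher connectivity of the end, a step your sketch omits; finally, use the join structure between $P$ and $Q$ to stretch the engulfed neighborhood of $P$ over $N\setminus W$, so that the compactum lands inside an open cell. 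Without the skeleton/dual-skeleton decomposition, the second engulfing into a neighborhood of infinity, and the stretching move, the construction of the exhausting cells $U_i$ does not get off the ground; with them, your monotone-union framework does complete the proof.
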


In 1974,   R. Lee and F. Raymond first considered the fundamental group of an end of a group. In particular, they considered groups that are simply connected at $\infty$. 

\begin{theorem} (R. Lee, F. Raymond \cite{LR})\label{LR} Let $G$ be a finitely presented group with normal subgroup $N$ isomorphic to $\mathbb Z^k$ and quotient $K= G/N$. Assume when $k=1$ that $K$ is 1-ended and that when $k=2$ that $K$ is not finite, and no restrictions when $k>2$. Then $G$ is simply connected at $\infty$. 
\end{theorem}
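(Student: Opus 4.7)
The plan is to build a finite 2-complex model for $G$ that reflects the extension $1 \to \mathbb{Z}^k \to G \to K \to 1$. Since $N = \mathbb{Z}^k$ is finitely generated and normal in the finitely presented group $G$, the quotient $K$ is finitely presented. Take a finite 2-complex $Y$ with $\pi_1(Y) = K$ and construct a finite 2-complex $X$ with $\pi_1(X) = G$ by taking (the 2-skeleton of) a twisted $T^k$-bundle over $Y$ determined by the extension cocycle. Then the universal cover $\tilde X$ admits a $G$-equivariant projection $p\colon \tilde X \to \tilde Y$ whose fibers are copies of $\mathbb{R}^k$ (the universal cover of $T^k$), and $\tilde Y$ is a simply connected locally finite 2-complex on which $K$ acts freely and cocompactly.

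\textbf{Reduction to a filling problem.} To show $G$ is simply connected at infinity, fix a basepoint $v \in \tilde X$; for each compact $C \subset \tilde X$ I must produce a larger compact $D$ such that every loop $\alpha$ in $\tilde X \setminus D$ is null-homotopic in $\tilde X \setminus C$. Given such an $\alpha$, the projected loop $p(\alpha)$ bounds a disk $\Delta$ in $\tilde Y$ by simple connectivity. Lift $\Delta$ cell-by-cell to a map $\tilde\Delta\colon D^2 \to \tilde X$ extending $\alpha$; the cellular ambiguity at each vertex lies in $\pi_1$ of the fiber $\mathbb{R}^k$, which is trivial, so the lift exists. The question is whether $\tilde\Delta(D^2)$ can be arranged to miss $C$.

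\textbf{Three cases by fiber dimension.} The obstruction to staying outside $C$ is handled differently in each case:

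For $k \geq 3$: the fiber $\mathbb{R}^k$ is simply connected at infinity, so whenever the naive lift crosses $C$, one pushes the offending sub-disk along the fiber direction past $C$ using that $\pi_1$ at infinity of $\mathbb{R}^k$ vanishes. Taking $D$ large enough in the proper metric makes this possible everywhere simultaneously.

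For $k = 2$: $\mathbb{R}^2$ is \emph{not} simply connected at infinity, so fiber-direction pushes alone can fail. Use that $K$ is infinite: choose $\kappa \in K$ far enough from the identity that $\kappa \cdot C$ is disjoint from the image of the lift, then modify the filling by a bounded homotopy that translates through $\kappa$. The infinitude of $K$ provides room in the base to absorb the planar obstruction in the fiber.

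For $k = 1$: fibers are $\mathbb{R}$, which is 2-ended, so the fiber direction does not go to infinity in a 1-ended way. Here use 1-endedness of $K$ to route the disk around $C$ through $\tilde Y$, combined with the fact that each fiber is at least simply connected, so the lift encounters no fiber-direction obstruction once its base-projection has been steered clear of $p(C)$.

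\textbf{Main obstacle.} The hardest step is the $k=2$ case, where neither fiber nor base alone is strong enough: one must quantify how $K$-translations interact with the width of the fiber-direction correction, and verify that a single global modification of $\tilde\Delta$ keeps the filled disk outside $C$. This is exactly where the hypothesis ``$K$ not finite'' enters essentially, and a careful bookkeeping argument (rather than a pure transfer from base or fiber) is required.
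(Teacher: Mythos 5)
The paper does not prove this statement: it is quoted from Lee--Raymond \cite{LR} as background, so there is no internal proof to compare against. Judged on its own terms, your proposal is the right general strategy (fiber/base decomposition with a case split on $k$, which is how the classical proofs go), but the decisive steps are not carried out, and two of them are incorrect as stated.

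In the $k=2$ case you propose to ``modify the filling by a bounded homotopy that translates through $\kappa$.'' Translating the filling disk by a group element moves its boundary off $\alpha$, so this cannot be the modification. What is actually needed is to use the fibration and the filling in $\tilde Y$ to reduce $\alpha$ to a loop lying in a bounded neighborhood of a single fiber, observe that the only obstruction to filling that loop far from $C$ is its winding number in that copy of $\mathbb{R}^2$, and then drag the loop along a path into a translate of the fiber disjoint from $C$ where the winding can be undone --- checking that the annular track of the drag also misses $C$. You explicitly defer this (``a careful bookkeeping argument \ldots is required''), so the crux of the theorem is identified but not proved; that is a genuine gap, not a detail.

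The $k=1$ case as written also fails: ``routing the disk around $C$ through $\tilde Y$'' using only 1-endedness of $K$ is not possible in general, since steering a 2-dimensional filling around a compact set requires a simple-connectivity-at-infinity property of the base, which is not hypothesized. The argument must use the fiber direction $\mathbb{R}$ essentially (pushing loops into cosets of $N$ and out toward one of the two ends of a fiber, with 1-endedness of $K$ used to reconcile the two choices of end). Separately, your model complex --- the 2-skeleton of a twisted $T^k$-bundle over a presentation complex $Y$ of $K$ --- needs justification that its fundamental group is $G$: over a non-aspherical $Y$ the boundary map $\pi_2(Y)\to\pi_1(T^k)$ in the fibration sequence can be nonzero, so realizing the prescribed extension class requires an argument (or one should work directly with a presentation 2-complex of $G$, as in \cite{LR} and \cite{J}).
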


For a reasonable space $X$ (or finitely presented group $G$), one needs to know that $X$ (respectively $G$) is semistable at $\infty$ in order to have the fundamental group of an end of $X$ (respectively $G$) defined independent of base ray. In 1982, B. Jackson generalized Theorem \ref{LR} and in 1983, M. Mihalik proved the first semistability at $\infty$ theorem for a class of finitely presented groups. These two  results serve as a starting point for this paper. 

\begin{theorem}  \label{J}  (B. Jackson \cite{J}) 
If $H$ is an infinite, finitely presented, normal subgroup of infinite index in the finitely presented group $G$, and either $H$ or $G/H$ is 1-ended. Then $G$ is simply connected at $\infty$.   
\end{theorem}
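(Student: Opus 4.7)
The plan is to verify, in the universal cover $\tilde K$ of a finite presentation 2-complex $K$ for $G$, that for every compact $C \subset \tilde K$ there is a larger compact $D$ so that every loop in $\tilde K \setminus D$ bounds a 2-disk in $\tilde K \setminus C$. First I would record that $G$ is 1-ended: an infinite, finitely generated, normal subgroup of infinite index in a finitely presented group forces $G$ to have exactly one end by a standard ends-of-groups argument, so $\tilde K$ itself has one end.

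Next I would set up a geometric decomposition adapted to the short exact sequence $1 \to H \to G \to Q \to 1$. Choose a finite presentation of $G$ compatible with a finite presentation of $H$, so that the Cayley 2-complex $X$ of $G$ contains a subcomplex $X_H$ realizing the Cayley 2-complex of $H$, and the translates $gX_H$ for $gH \in Q$ form ``horizontal sheets''; edges of $X$ not contained in a single sheet project to edges in the Cayley graph of $Q$. The picture is that $X$ is assembled from sheets $gX_H$ glued along ``vertical'' edges governed by $Q$.

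The proof then has two stages. In the \emph{reduction} stage, given a loop $\alpha$ deep in $\tilde K \setminus D$, I would use 1-endedness of whichever of $H$ or $Q$ is 1-ended to homotope $\alpha$, within $\tilde K \setminus C$, into a product of loops each contained in a single sheet. If $Q$ is 1-ended, the ``coset track'' of $\alpha$ descends to a loop in the Cayley graph of $Q$ that can be filled far from the projection of $C$; lifting this filling to a van Kampen diagram in $X$ produces vertical strips reassembling $\alpha$ into coset-bound loops. If instead $H$ is 1-ended, cocompactness of the $H$-action on $X_H$ combined with 1-endedness of $H$ allows one to push $\alpha$ across gluing edges without meeting $C$. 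In the \emph{filling} stage, a loop lying in a single sheet $gX_H$ bounds a disk in $gX_H$ since $X_H$ is simply connected, but that disk may hit $C$; one removes the intersections by using 1-endedness of $H$ to reroute within the sheet, or 1-endedness of $Q$ to push pieces of the disk into neighboring sheets.

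The main obstacle is controlling the filling stage while making $D$ depend only on $C$. The compactum $C$ can intersect each sheet in an essentially arbitrary way, and a single rerouting argument must handle all loops simultaneously; this forces the argument to proceed either by an inductive sheet-by-sheet removal of intersections or by invoking a uniform coarse-geometric property (roughly, that $H$ is ``uniformly undistorted and one-ended in $G$''). Establishing this uniformity, and packaging it into a clean choice of $D = D(C)$, is the technical heart of the proof.
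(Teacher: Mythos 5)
The paper does not prove Theorem~\ref{J} directly: it is quoted from Jackson \cite{J} and, within this paper, it is recovered as a special case of the commensurated-subgroup machinery (a normal subgroup is commensurated, so the $H$ 1-ended case follows from Theorem~\ref{MainCM}/\ref{MainFG}, and the $G/H$ 1-ended case from the ``one filtered end'' clause of Theorem~\ref{MainCM}). So your proposal must be judged on its own terms, and it has a genuine gap at exactly the point you flag as the ``technical heart.'' The failure is not merely one of uniformity in the choice of $D(C)$: the mechanism you propose for the filling stage is wrong. You claim a loop lying in a single sheet $gX_H$ can be filled in $\tilde K - C$ by ``using 1-endedness of $H$ to reroute within the sheet.'' Take $G=\mathbb{Z}^3$, $H=\mathbb{Z}^2$: each sheet is a plane $\mathbb{R}^2$, which is 1-ended but \emph{not} simply connected at infinity, so a large circle in the sheet through the basepoint, encircling $C$, admits no filling inside its own sheet that avoids $C$. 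The disk must escape transversally into other sheets, and in this example $Q=\mathbb{Z}$ is 2-ended, so your fallback (``1-endedness of $Q$ to push pieces of the disk into neighboring sheets'') is not available either; what actually powers the transverse escape is the infinite index of $H$ in $G$, not 1-endedness of $Q$. In short, 1-endedness of $H$ is the wrong tool for the filling stage, and the case analysis attaches the hypotheses to the wrong steps.

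What is actually needed, and what the paper's framework supplies, is a two-part replacement for your filling stage: (i) a \emph{semistability} statement for the sheets, used to construct a proper homotopy (an annulus running out to infinity along chosen proper rays $r_v$, as in Lemma~\ref{4.3} and Lemma~\ref{ML}) carrying $\alpha$, inside $\tilde K - C$, to a loop $\beta$ lying beyond any prescribed compact set while staying in a bounded neighborhood of a single coset of a suitable subgroup; and (ii) a \emph{relative} simple connectivity at infinity statement for that subgroup inside $G$ (Definition~\ref{SCin}, made robust to bounded neighborhoods and finitely many cosets by Lemma~\ref{equiv} and the remark following it), which kills $\beta$ in $\tilde K - C$. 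Your reduction stage (coset tracks, vertical strips from a van Kampen diagram over $Q$) is a reasonable sketch of the bookkeeping, but without ingredients (i) and (ii) the argument cannot close, and supplying them is not a routine tightening of your outline --- it is the content of the theorem.
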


\begin{theorem} \label{M1} (M. Mihalik \cite{M1}) 
If $H$ is an infinite, finitely generated, normal subgroup of infinite index in the finitely presented group $G$, then $G$ is semistable at $\infty$. 
\end{theorem}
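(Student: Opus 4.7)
The plan is to work in the Cayley 2-complex $X$ of a finite presentation of $G$ and verify the standard characterization of semistability: any two proper edge-rays $r_0,r_1$ in $X$ with a common basepoint converging to the same end of $X$ admit a proper homotopy rel basepoint. I would first choose a finite generating set $S$ for $G$ that contains a finite generating set $S_H$ for $H$; then each coset $gH$ is naturally isometric (as a labeled subgraph) to the Cayley graph of $H$ with respect to $S_H$, and is therefore an infinite connected subgraph of $X$. Because $H$ has infinite index, cosets can be found arbitrarily far from any compact subset of $X$.

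The essential geometric input is that normality of $H$ supplies, for each $s\in S$ and each $h\in S_H$, a word $w_{s,h}$ in $S_H^{\pm 1}$ with $shs^{-1}=w_{s,h}$, and these finitely many conjugation relations are consequences of the given finite presentation and so are filled in $X$ by van Kampen diagrams of bounded area $L$. Consequently an $H$-path of length $k$ starting at a vertex $g$ can be ``slid'' across any adjacent $G$-edge via a disk of area at most $kL$, producing a parallel $H$-path in the neighboring coset. This is a uniform, finite-area mechanism for transporting $H$-paths along edges of $X$, and it is the engine that drives the construction.

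Given proper rays $r_0,r_1$ converging to the same end, I would build a ladder whose $n$-th rung $\gamma_n$ connects $r_0(n)$ to $r_1(n)$ through cosets of $H$ chosen to lie outside progressively larger compact sets; successive quadrilaterals bounded by segments of $r_0$, $r_1$, $\gamma_n$, and $\gamma_{n+1}^{-1}$ are then filled by concatenating sliding disks of the type above with the 2-cells already traversed by the rays. The main obstacle is properness: one must choose the rungs, and their $H$-coset routings, so that the disks filling between $\gamma_n$ and $\gamma_{n+1}$ all lie outside any prescribed compact set for $n$ sufficiently large. This is the step in which all four hypotheses must be exploited simultaneously: finite presentability of $G$ bounds the filling areas, finite generation of $H$ bounds the complexity of each rung, normality of $H$ keeps each sliding disk inside a single coset, and infinite index of $H$ provides cosets arbitrarily far from the basepoint into which the rungs and their fillings can be pushed. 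Orchestrating this properness, rather than producing the homotopy abstractly, is the crux of the proof.
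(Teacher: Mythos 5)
First, a point of orientation: the paper does not prove this statement itself --- it is quoted from \cite{M1} as a known result, subsumed here by Theorem \ref{MainCM} and by the Main Theorem. The closest thing to a proof in the paper is the argument of $\S 3$ (Lemmas \ref{FI}, \ref{IND} and \ref{4.3}), whose base case is Theorem \ref{MainCM} and whose inductive step is modeled directly on the argument of \cite{M1}. Measured against that, your proposal correctly identifies the engine of the proof: take a finite generating set containing generators of $H$, so each coset $vH$ is an infinite connected subgraph of the Cayley $2$-complex; use normality plus finite presentability to get finitely many conjugation relations $shs^{-1}=w_{s,h}$ filled by bounded-area diagrams, so that $\mathcal H$-paths can be slid across edges at bounded cost; and use infinite index to find cosets escaping every compact set. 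Your accounting of which hypothesis does what is also accurate.

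The gap is in the step you defer as ``orchestrating properness,'' which is not bookkeeping but the actual content of the theorem. Two problems are left unaddressed. First, the ladder organization does not interact well with the sliding mechanism: sliding moves an $\mathcal H$-path across a single edge into the adjacent coset, but your rungs $\gamma_n$ and $\gamma_{n+1}$ join $r_0(n)$ to $r_1(n)$ and $r_0(n+1)$ to $r_1(n+1)$ respectively, and these need not be related by conjugation by a single generator; routing the rungs through far-away cosets only makes consecutive rungs harder to compare. The paper's organization avoids this: by the graph-theoretic Lemma 2 of \cite{M2} one attaches to \emph{every} vertex $v$ an $\mathcal H$-ray $r_v$ so that only finitely many $r_v$ meet any compact set, and one then reduces (as in Lemma \ref{4.3}, using part (5) of Theorem \ref{ssequiv}) to showing that an arbitrary ray $t$ is properly homotopic rel basepoint to $r_{t(0)}$, by patching controlled homotopies between $r_{v_{i-1}}$ and $e_i\ast r_{v_i}$ one edge at a time. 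Second, and more seriously, that patching forces you to compare two $\mathcal H$-rays based at the same vertex by a proper homotopy avoiding a prescribed compact set; since $H$ is only finitely generated, the coset graph $vH$ carries no $2$-cells and no semistability of its own (in $\S 3$ of the paper this comparison is supplied by the inductive semistability of the subgroup $A$, via condition (3) of the filtration, and in the base case by the separate argument of \cite{CM}/\cite{M1} that pushes the comparison into cosets missing $C$). Your sketch never confronts this sub-problem, and without it the quadrilaterals in your ladder cannot be filled off a given compact set. So the proposal is a correct identification of the ingredients, but the idea that actually closes the argument is missing.
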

 
In 1985, the following connections were drawn between semistability and simple connectivity at $\infty$, and group cohomology.

\begin{theorem} (R. Geoghegan, M. Mihalik \cite{GM}) \label{GM} 
If $G$ is a finitely presented and  semistable at $\infty$ group then $H^2(G,\mathbb ZG)$ is free abelian. If $G$ is simply connected at $\infty$ then $H^2(G,\mathbb ZG)=0$. 
\end{theorem}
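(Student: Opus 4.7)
My plan is to translate $H^2(G,\mathbb Z G)$ into cohomology at infinity of the universal cover of a $K(G,1)$-model for $G$, and then read off the two conclusions from the hypotheses on the end. Since $G$ is finitely presented, I fix a $K(G,1)$-complex $X$ with finite 2-skeleton (built from a presentation 2-complex) and let $\tilde X$ denote its universal cover, which is contractible and has locally finite 2-skeleton on which $G$ acts freely and cocompactly. The cellular chain complex yields a finitely generated $\mathbb Z G$-resolution of $\mathbb Z$ through degree 2, so by the standard dualization one has $H^2(G,\mathbb Z G)\cong H^2_c(\tilde X)$. Combined with the long exact sequence of the pair $(\tilde X,\tilde X\setminus K)$ and the vanishing $H^i(\tilde X)=0$ for $i\ge 1$, this gives
\[
H^2(G,\mathbb Z G)\;\cong\;\varinjlim_K H^1(\tilde X\setminus K)\;=:\;H^1_e(\tilde X),
\]
and since $H_0$ is always free abelian, universal coefficients identifies each $H^1(\tilde X\setminus K)\cong\mathrm{Hom}(H_1(\tilde X\setminus K),\mathbb Z)$, which is automatically torsion-free.

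For the second statement, simple connectivity at infinity furnishes a cofinal tower $K_1\subset K_2\subset\cdots$ along which each bond $\pi_1(\tilde X\setminus K_{i+1},r)\to\pi_1(\tilde X\setminus K_i,r)$ is the zero map. Abelianizing makes each $H_1$-bond zero, and applying $\mathrm{Hom}(-,\mathbb Z)$ makes each transition in the direct system $\{H^1(\tilde X\setminus K_i)\}$ the zero map. Every element of the colimit is then killed at the next stage, so $H^1_e(\tilde X)=0$ and $H^2(G,\mathbb Z G)=0$.

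For the first statement, semistability at infinity gives a cofinal tower $\{K_i\}$ with surjective $\pi_1$-bonds; this descends to surjective $H_1$-bonds, and dualizing by $\mathrm{Hom}(-,\mathbb Z)$ turns the inverse tower of $H_1$'s into a direct system of $H^1$'s with \emph{injective} transitions, expressing $H^1_e(\tilde X)$ as an ascending union of torsion-free abelian groups. \emph{The main obstacle} is upgrading ``torsion-free'' to ``free abelian'', since ascending unions of free abelian groups along injections need not be free (e.g.\ $\mathbb Z[1/2]$ is a colimit of copies of $\mathbb Z$). My approach is to analyze each stage-to-stage cokernel via the six-term $\mathrm{Hom}$--$\mathrm{Ext}$ sequence attached to
\[
0\to\ker_i\to H_1(\tilde X\setminus K_{i+1})\to H_1(\tilde X\setminus K_i)\to 0,
\]
which exhibits the cokernel of $H^1(\tilde X\setminus K_i)\hookrightarrow H^1(\tilde X\setminus K_{i+1})$ as a subgroup of $\mathrm{Hom}(\ker_i,\mathbb Z)$. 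One then leverages the finite type of the 2-skeleton model (via cocompactness of the $G$-action) together with a Mayer--Vietoris decomposition of $\tilde X$ across $\partial K_i$ to argue that these kernels are sufficiently controlled (finitely generated up to a free summand) for each cokernel to be free abelian. A standard splitting argument for an ascending union whose consecutive quotients are free then writes $H^1_e(\tilde X)$ as a direct sum of the cokernels, yielding free-abelianness and completing the proof.
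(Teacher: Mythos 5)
First, a point of reference: the paper does not prove this statement. Theorem \ref{GM} is quoted as background from the 1985 Geoghegan--Mihalik article \cite{GM}, so there is no internal proof to compare yours against, and I can only assess your argument on its own terms. Your reduction is the classical one and is essentially correct: with $X$ a $K(G,1)$ having finite $2$-skeleton, one gets $H^2(G,\mathbb Z G)\cong H^2_c(\tilde X)\cong\varinjlim_K H^1(\tilde X\setminus K)\cong\varinjlim_K \mathrm{Hom}(H_1(\tilde X\setminus K),\mathbb Z)$, and your treatment of the simply connected at $\infty$ case is sound: trivial $\pi_1$-bonds force trivial $H_1$-bonds (since $H_1$ is generated by classes of loops), hence trivial bonds in the dual direct system, hence a vanishing colimit. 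Torsion-freeness in the semistable case is likewise immediate from this setup.

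However, the free-abelian conclusion --- the substantive content of the first assertion --- is not proved; your final paragraph is an announced plan, and as described it has a genuine gap. The cokernel of $\mathrm{Hom}(H_1(\tilde X\setminus K_i),\mathbb Z)\hookrightarrow \mathrm{Hom}(H_1(\tilde X\setminus K_{i+1}),\mathbb Z)$ does embed in $\mathrm{Hom}(\ker_i,\mathbb Z)$ by the six-term sequence, but this buys nothing toward freeness: the dual of a countable abelian group need not be free abelian (for example $\mathrm{Hom}(\bigoplus_{\mathbb N}\mathbb Z,\mathbb Z)\cong\prod_{\mathbb N}\mathbb Z$ is not free), and subgroups of such duals need not be free either. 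Your proposed remedy --- that cocompactness and a Mayer--Vietoris decomposition force the kernels to be ``finitely generated up to a free summand'' --- is unjustified and false in general: $H_1(\tilde X\setminus K)$ for the universal cover of a finite complex is typically infinitely generated, and the kernels of the surjectivized bonds inherit no such finiteness. The same difficulty already afflicts the bottom group $\mathrm{Hom}(H_1(\tilde X\setminus K_1),\mathbb Z)$, which your ascending-union splitting argument (itself correct) needs to be free as its base case. The real content of \cite{GM} is precisely the nontrivial algebraic theorem that the direct limit of the duals of an inverse sequence of countable abelian groups with pro-epimorphic bonds is free abelian; this rests on $\aleph_1$-freeness criteria of Pontryagin/Hill type and is not recoverable from the torsion-freeness and $\mathrm{Hom}$--$\mathrm{Ext}$ observations you make. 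To complete the proof you must either import that theorem or supply an argument of comparable depth.
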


It is unknown whether or not all finitely presented groups are semistable at $\infty$. It is also unknown whether or not for all finitely presented groups $G$, $H^2(G,\mathbb ZG)$ is free abelian. The main theorem in the unpublished 1993 PhD dissertation of V. Ming Lew generalized Theorem \ref{M1} and the main theorem of the 1990 PhD dissertation of J. Profio generalized Theorem \ref{J}: 

\begin{theorem}\label{L}
(V. M. Lew \cite{L}) 
Suppose $H$ is an infinite,  finitely generated, subnormal subgroup of the finitely generated group $G$: 
$$H = N_0 \lhd N_1 \lhd N_2 \lhd \ldots  \lhd N_k = G, \hbox{ for } k\geq 1$$
and $H$ has infinite index in $G$. Then $G$ is 1-ended and semistable at $\infty$. 
\end{theorem}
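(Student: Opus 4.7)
My plan is to induct on the length $k$ of the subnormal series.  The base case $k=1$ is the assertion that Theorem \ref{M1} continues to hold under the new, broader notion of semistability at infinity for finitely generated (not necessarily finitely presented) groups introduced in this paper.  The task here is to verify that Mihalik's original geometric argument for Theorem \ref{M1}---which works in the Cayley graph of $G$ and exploits the decomposition into cosets $gH$, each a translate of the Cayley graph of $H$, together with the uniform control coming from normality of $H$---goes through essentially unchanged in the finitely generated setting.  1-endedness follows because $H$ is infinite and of infinite index, so the disjoint $H$-cosets form an infinite collection of unbounded pieces that must all lie in a single end; semistability follows by pushing loops at infinity along these cosets to a fixed base ray.

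For the inductive step I would split into cases depending on whether $[N_1 : H]$ is finite or infinite.  If $[N_1 : H] < \infty$ then $N_1$ is finitely generated (adjoin finitely many coset representatives to a generating set of $H$), infinite, and of infinite index in $G$, since $[G:N_1]\cdot [N_1:H] = [G:H] = \infty$.  The truncated chain $N_1 \lhd N_2 \lhd \cdots \lhd N_k = G$ then has length $k-1$ and satisfies the hypotheses of the theorem with $N_1$ playing the role of $H$, so the inductive hypothesis immediately yields that $G$ is 1-ended and semistable at $\infty$.

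The case $[N_1 : H] = \infty$ is the main obstacle, since $N_1$ need not be finitely generated and no group-theoretic truncation is available.  Here I would work directly in the Cayley graph of $G$, with a fixed finite generating set containing generators of $H$, and exploit the entire normal tower at once.  For each compact $C \subset G$ the idea is to build nested compact sets $C \subset C_1 \subset \cdots \subset C_k$ such that a loop in the complement of $C_k$ can be pushed successively---first within an $H$-coset, then within an $N_1$-coset, and so on up the tower---to reach the base ray while staying outside $C$, where normality of $N_{i-1}$ in $N_i$ supplies the equivariance needed to keep each step uniformly bounded and the infinite index $[N_1:H]=\infty$ supplies the room for the initial push.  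The hard part is the combinatorial bookkeeping: controlling distortion of distances as one conjugates cosets up the tower, and verifying that every intermediate move stays within the fixed ambient Cayley graph of $G$ despite the possibly non-finitely-generated intermediate $N_i$'s.
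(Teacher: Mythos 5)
Your induction-on-$k$ skeleton and your instinct to split on a finite/infinite index dichotomy are the right shape, but the proposal has a genuine gap exactly where you flag one: the case in which the group one step above $H$ is not finitely generated. ``Work directly in the Cayley graph and exploit the entire normal tower at once, controlling the distortion'' is not an argument --- it is a restatement of the theorem, and the bookkeeping you defer is the entire difficulty. Moreover the dichotomy you chose, on $[N_1:H]$, is not the useful one. The paper's proof (of the more general subcommensurated statement, Lemmas \ref{FI} and \ref{IND}) hinges on a construction your proposal is missing: fix a finite generating set $\mathcal H\cup\mathcal S$ of $G$ with $\mathcal H$ generating $H$, and let $A\leq N_{k-1}$ be the subgroup generated by $\mathcal H$ together with finite generating sets of the groups $N_{k-1}\cap s^{-1}Hs$ for $s\in\mathcal S^{\pm1}$ (in the subnormal case $N_{k-1}\lhd G$ forces $s^{-1}Hs\leq N_{k-1}$, so $A=\langle H,\, s^{-1}Hs : s\in\mathcal S\rangle$; in general finite generation of these intersections is Lemma \ref{SUB2}). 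The dichotomy is then on $[A:H]$. If $[A:H]<\infty$, then $H\cap s^{-1}Hs$ has finite index in both $H$ and $s^{-1}Hs$ for every generator $s$, so $H$ is \emph{commensurated} in all of $G$ (Lemma \ref{FI}) and the base case, Theorem \ref{MainCM}, applies directly --- no induction needed. If $[A:H]=\infty$, then $H=N_0\lhd N_1\cap A\lhd\cdots\lhd N_{k-1}\cap A=A$ is a strictly shorter chain ending in a \emph{finitely generated} group, so the inductive hypothesis makes $A$ itself $1$-ended and semistable; one then adjoins to the Cayley graph of $G$ a finite semistability set of $\mathcal A$-relations for $A$ together with the conjugation $2$-cells $a=s^{-1}a_ss$, and pushes an arbitrary proper ray onto $\mathcal H$-rays coset by coset (Claim \ref{Cl3} and Lemma \ref{4.3}). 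This sidesteps entirely the non-finitely-generated intermediate groups that stall your argument.

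Two smaller points. Your base case is not quite Theorem \ref{M1}: since $G$ is only assumed finitely generated, the correct base case is the commensurated-subgroup theorem for finitely generated groups, Theorem \ref{MainCM} (resting on \cite{M4} and \cite{CM}); asserting that the original finitely presented argument ``goes through essentially unchanged'' is itself a nontrivial claim that those references supply, so it should be cited rather than waved at. And your one-line justification of $1$-endedness (infinitely many unbounded cosets ``must all lie in a single end'') is not a proof; in the paper $1$-endedness and semistability are obtained simultaneously by showing that all proper edge path rays in the augmented complex $\tilde\Gamma$ are properly homotopic.
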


\begin{theorem} \label{P}
(J. Profio \cite{P}) 
Suppose $H\lhd N\lhd G$ is a normal series with $H$ and $G$ finitely presented, and $H$ 1-ended and of infinite index in $G$. Then $G$ is simply connected at $\infty$.
\end{theorem}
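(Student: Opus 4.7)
The plan is a case split on the finiteness of the indices $[G:N]$ and $[N:H]$, reducing all but one case to Jackson's Theorem \ref{J}, then invoking Lew's Theorem \ref{L} together with a further argument in the remaining case.

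If $[G:N] < \infty$, then $N$ is finitely presented, $H$ has infinite index in $N$, and $H \lhd N$ satisfies the hypotheses of Theorem \ref{J}; hence $N$ is simply connected at $\infty$. Since simple connectivity at $\infty$ is a proper-homotopy invariant of the universal cover of a finite $K(\pi,1)$, it is unchanged on passing to a finite index overgroup, so $G$ is simply connected at $\infty$. If instead $[N:H] < \infty$, then $N$ inherits finite presentability from $H$ (as a finite extension) and 1-endedness from $H$ (same number of ends), so $N \lhd G$ again satisfies Theorem \ref{J}. It therefore suffices to treat the case $[G:N] = [N:H] = \infty$.

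In this main case I first apply Lew's Theorem \ref{L} to the subnormal series $H \lhd N \lhd G$ to conclude that $G$ is 1-ended and semistable at $\infty$. The remaining task is to upgrade semistability to simple connectivity at $\infty$: show that every proper loop in the Cayley 2-complex $X$ of $G$ lying outside a given compactum bounds a disk outside some smaller compactum. The strategy is to use the fiber structure supplied by the series. Left $H$-cosets carry thickened copies of the Cayley 2-complex of $H$ inside $X$; by normality of $H$ in $N$, these fibers are permuted by $N$; and by normality of $N$ in $G$, the resulting coarse picture is permuted by $G$. Using 1-endedness of $H$ and semistability of $G$, one splits a loop far out in $X$ into arcs each lying deep within a single $H$-fiber, and assembles a deep filling from relators of $H$ conjugated by transversals for $H$ in $N$ and for $N$ in $G$.

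The main obstacle is this final assembly. Because $H$ is only assumed 1-ended and is not known to be semistable or simply connected at $\infty$, one cannot fill a loop deep inside a single $H$-fiber using that fiber alone; the extra 2-cells of $X$ arising from the ambient group must be brought to bear. Controlling the diameter of the assembled filling so that it stays far from the chosen compactum requires careful bookkeeping that leans on the finite presentations of $H$ and $G$. This is also where normality, not mere subgroup inclusion, is essential: it guarantees that conjugates of $H$-relators remain relators and that fillings in neighboring fibers fit together coherently.
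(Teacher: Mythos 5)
Your preliminary reductions are sound: when $[G:N]<\infty$ or $[N:H]<\infty$ the statement does follow from Theorem \ref{J} together with the fact that simple connectivity at $\infty$ is insensitive to passing between commensurable finitely presented groups, and Lew's Theorem \ref{L} does give semistability of $G$ in the remaining case. But in that remaining case your argument stops exactly where the theorem actually lives. You correctly identify the obstacle --- a loop far out in the Cayley $2$-complex must be filled near translates of the $H$-fiber, and a single fiber cannot fill it since $H$ is not assumed simply connected (or even semistable) at $\infty$ --- but you do not resolve it; the ``careful bookkeeping'' you defer is the entire content of the theorem. The specific point your sketch never confronts is conjugation by elements of $G$ outside $N$: normality of $N$ in $G$ only gives $gHg^{-1}\le N$, and for $g\notin N$ the subgroup $gHg^{-1}$ need not be commensurable with $H$, so the ``fibers'' $gH$ met by your loop are not permuted copies of one another in any sense that lets conjugated $H$-relators glue coherently across adjacent fibers. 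Any filling estimate therefore depends on which cosets the loop meets, and a loop of bounded length can be based at any of infinitely many vertices; this failure of uniformity is exactly what Remark 1 following Lemma \ref{equiv} records, and it is why Profio's theorem resisted the naive extension of Jackson's argument for thirty years.

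The paper does not prove Theorem \ref{P} by a direct fiber-filling argument at all; it obtains it as the special case $H\prec N\prec G$ of Theorem \ref{MainA}. The proof there inducts on the length of the subcommensurated series with base case Theorem \ref{MainFG}, and the device replacing your missing assembly step is the notion of a finitely generated group being simply connected at $\infty$ inside a finitely presented overgroup (Definition \ref{SCin} and Lemma \ref{equiv}): one enlarges $H$ to a finitely generated $A\le Q_k$ containing generators of the groups $s^{-1}Hs\cap Q_k$, shows by induction that $A$ and the auxiliary groups $B_\ell$ (generated by the $gAg^{-1}\cap Q_k$ over $|g|\le\ell$) are simply connected at $\infty$, and then pushes a loop $\alpha$ of length $\ell$ into a uniform neighborhood of the single subgroup $B_\ell$, where Lemma \ref{equiv} supplies one compact set $E(C,\ell+N_\ell)$ that works for all such loops. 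To make your main case into a proof you would need to reconstruct machinery of this kind (or follow Profio's original argument); as written it is a plan, not a proof.
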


Given a subgroup $H$ of a group $G$, the element $g\in G$ is in the {\it commensurator} of $H$ in $G$ (denoted $Comm(H,G)$) if $gHg^{-1}\cap H$ has finite index in both $H$ and $gHg^{-1}$. The subgroup $H$ is {\it commensurated} in  $G$ if $Comm(H,G)=G$, so normal subgroups are commensurated. The main result of \cite{CM} generalizes Theorems \ref{M1} and \ref{J} in a direction different than these last two results:

\begin{theorem}\label{MainCM} (G. Conner, M. Mihalik \cite{CM}) 
If a finitely generated group $G$ has an infinite, finitely generated, commensurated subgroup $Q$, and $Q$ has infinite index in $G$, then $G$ is 1-ended and semistable at $\infty$. Furthermore, if $G$ and $Q$ are finitely presented and either $Q$ is 1-ended or the pair $(G,Q)$ has one filtered end, then $G$ is simply connected at $\infty$.
\end{theorem}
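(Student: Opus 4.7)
The plan is to work with the Cayley graph $\Gamma$ of $G$ relative to a finite generating set extending a finite generating set of $Q$ (and, when finite presentation is assumed, the Cayley 2-complex $X$). The vertex set of $\Gamma$ decomposes as a disjoint union of cosets $gQ$, each spanning a subgraph isomorphic to the Cayley graph of $Q$. The commensuration hypothesis says that for every $g \in G$, the subgroup $K_g := Q \cap gQg^{-1}$ has finite index in both $Q$ and $gQg^{-1}$. For each of the finitely many generators $s \notin Q$, this yields a finite collection of bounded-length \emph{connecting patterns} joining $Q$ to $sQ$ through a transversal of $K_s$ in $Q$. The uniform bound on the indices $[Q:K_s]$ over the finite generating set is the central quantitative ingredient.

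For 1-endedness, given any finite ball $B$ around $1_G$, I would use that $Q$ has infinite index (so there are infinitely many cosets) and that $Q$ is infinite (so each coset meets $\Gamma \setminus B$ in an infinite set). Although individual cosets need not be 1-ended, the infinite family of cosets together with the connecting patterns forms a richly linked structure: any two points of $\Gamma \setminus B$ can be joined by a chain passing through cosets whose representatives have been pushed far from $1_G$. For semistability, I would construct, given two proper rays $r_0, r_1$ from $1_G$, a proper homotopy between them by building level-by-level a connecting path $p_n$ from $r_0(n)$ to $r_1(n)$ that stays outside a ball of radius roughly $n - C$. Each $p_n$ is an alternating concatenation of intra-coset segments (using finite generation of $Q$ to route within a coset) and inter-coset segments (the bounded-length connecting patterns). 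Uniform bounds on pattern length force the homotopy to be proper.

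Under the stronger assumption that $G$ and $Q$ are finitely presented and either $Q$ is 1-ended or $(G,Q)$ has one filtered end, I work in the Cayley 2-complex $X$ to obtain simple connectivity at infinity. A loop $\lambda$ in $X$ near infinity is first deformed, as in the semistability step, to lie in a union of coset-subcomplexes glued by inter-coset strips. The strips are capped by 2-cells coming from relators of $G$ that encode the conjugation identities between $Q$ and its $s$-conjugates, while the intra-coset pieces are capped within each coset using the finite presentation of $Q$. The 1-endedness of $Q$ (or the one-filtered-end hypothesis on $(G,Q)$) is exactly what ensures these filling disks can be arranged to stay far from the basepoint.

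The hardest point is the loss of a quotient group $G/Q$. In the normal-subgroup theorems of Jackson, Lew, Mihalik, and Profio, one has $gQg^{-1} = Q$ and a well-defined action on cosets through $G/Q$, so translation between cosets is free of cost; here one must work with commensuration, maintaining uniform quantitative control on how $gQg^{-1}$ deviates from $Q$ across the generating set. A secondary obstacle appears in the simple-connectivity argument: the filling disks for inter-coset annuli must themselves stay far from the basepoint, which requires a delicate inductive refinement of the semistability construction and is precisely why the one-filtered-end hypothesis is natural---it effectively builds the needed fillings into the assumption.
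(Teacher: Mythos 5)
First, a point of context: the paper does not prove Theorem \ref{MainCM} at all --- it is imported from \cite{CM} (with the $1$-endedness part resting on \cite{CM2}) and used only as the base case of the inductions in \S 3 and \S 4. So your proposal can only be measured against the strategy of \cite{CM} as it is mirrored in the inductive step of \S 3. At that level your architecture is the right one: cosets $vQ$ as unbounded pieces, bounded-cost passage between $vQ$ and $vsQ$ coming from commensuration, and a level-by-level ladder assembling a proper homotopy. But two essential ideas are missing, and without them the plan does not close up.

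(1) For a finitely generated $G$, semistability means (Definition \ref{defss3}) that $\Gamma_{(G,\mathcal S)}(P)$ is semistable for some \emph{finite set of relations} $P$; a bare Cayley graph has no $2$-cells, so the ``proper homotopy between $r_0$ and $r_1$'' you promise cannot even be formulated until you say which $2$-cells are attached. The crucial choice --- a finite generating set $\mathcal A_s$ for the infinite, finitely generated group $Q\cap s^{-1}Qs$ together with the conjugation relations $a=s^{-1}a_ss$ --- is exactly what makes the inter-coset strips of your ladder fillable (Claim \ref{Cl3} here); your ``connecting patterns through a transversal of $K_s$'' uses the finite index in the wrong way and never produces these cells. (2) Consecutive rungs $p_n$, $p_{n+1}$ of your ladder must cobound fillable annuli whose intra-coset parts lie in a copy of the Cayley graph of $Q$, and $Q$ is only assumed finitely generated --- not semistable, not $1$-ended. ``Using finite generation of $Q$ to route within a coset'' produces paths, not homotopies between consecutive paths; this is the actual difficulty, and it is why \S 3 needs the filtration conditions (1)--(3), the ray-selection lemma (Lemma 2 of \cite{M2}) giving rays $r_v$, $r_{(s,v)}$ meeting each compact set for only finitely many $v$, and homotopies routed through the conjugation cells rather than within a fixed coset. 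The same gap recurs in your simple-connectivity step: the sentence that $1$-endedness of $Q$ ``is exactly what ensures these filling disks can be arranged to stay far from the basepoint'' asserts the conclusion rather than proving it (supplying a mechanism is what Definition \ref{SCin} and Lemma \ref{equiv} are for). Finally, the $1$-endedness claim is not established by the ``richly linked structure'' remark: a point of $gQ\setminus B$ may a priori lie in a bounded component of $gQ\setminus B$, so one cannot simply push along the coset to escape $B$ before hopping; this is a theorem of \cite{CM2} with its own nontrivial proof.
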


\begin{example} 
For $p$ a prime, the  group $SL_n(\mathbb Z[{1\over p}])$ is finitely presented. When $n>2$ the only normal subgroups of this group are either finite or of finite index (see \cite{SW}). For $n>2$, the finitely presented 1-ended subgroup, $SL_n(\mathbb Z)$ is commensurated in $SL_n(\mathbb Z[{1\over p}])$ and so by Theorem \ref{MainCM},  $SL_n(\mathbb Z[{1\over p}])$ is 1-ended and simply connected at $\infty$. 
\end{example}

While Lew's theorem improved Theorem \ref{M1} by replacing normality by subnormality, Profio's result was the best attempt  in the last 30 years, to improve the normality hypothesis of Theorem \ref{J} to subnormality. As a corollary of our main theorem, we obtain the subnormal version of Jackson's Theorem \ref{J}. The semistability part of Theorem \ref{MainA} is proved first and then used in an essential way in the proof of the simply connected at $\infty$ part of Theorem \ref{MainA}. A new idea, the simple connectivity at $\infty$ of a finitely generated group, is introduced and used in a fundamental way to prove the second part of Theorem \ref{MainA}.  We point out that we cannot prove this part of Theorem \ref{MainA}, even in the finitely presented case, without this new concept.

If $Q$ is a commensurated subgroup of $G$ we use the notation $Q\prec G$.
The main theorem of this article is the following:

\begin{theorem} \label{MainA} (Main Theorem)
Suppose $H$ is a finitely generated infinite subgroup of infinite index in the finitely generated group $G$, and  $H$ is subcommensurated in $G$:
$$H=Q_0\prec Q_1\prec \cdots \prec Q_{k}\prec G$$
Then $G$ is 1-ended and semistable at infinity. If additionally, $H$ is 1-ended and finitely presented then the finitely generated group $G$ is simply connected at $\infty$.
\end{theorem}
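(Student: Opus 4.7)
The plan is to proceed by induction on the length $k$ of the subcommensurated series, with the base case $k=0$ being the theorem of Conner-Mihalik (Theorem \ref{MainCM}); that case directly delivers both the semistability conclusion and, under the added hypothesis that $H$ is 1-ended and finitely presented, the simple connectivity at $\infty$ conclusion for $G$. The inductive step must reduce an arbitrary series $H = Q_0 \prec Q_1 \prec \cdots \prec Q_k \prec G$ of length $k+1$ to a strictly shorter one to which the hypothesis applies.

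The easy half of the inductive step is the case in which $Q_k$ has finite index in $G$: then $Q_k$ is itself finitely generated, contains $H$ with infinite index, and admits the shorter series $H \prec Q_1 \prec \cdots \prec Q_{k-1} \prec Q_k$ of length $k$. The inductive hypothesis gives that $Q_k$ is 1-ended and semistable at infinity, and all three of the relevant properties (1-endedness, semistability at $\infty$, and simple connectivity at $\infty$) pass from $Q_k$ to $G$ because $G$ and $Q_k$ are quasi-isometric.

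The main obstacle is the case $[G:Q_k] = \infty$, in which $Q_k$ is commensurated in $G$ but need not itself be finitely generated, so that neither Theorem \ref{MainCM} nor the inductive hypothesis is directly available. To handle this I would manufacture a finitely generated subgroup $K$ with $H \subseteq K \subseteq Q_k$ that is itself commensurated in $G$. The construction should proceed one layer at a time: at the $i$-th layer, take a finitely generated subgroup of $Q_{i-1}$ already shown commensurated in $Q_i$, and enlarge it inside $Q_i$ by adjoining a finite set of elements witnessing commensuration under finitely many chosen $Q_{i+1}$-conjugators, iterating to the top of the chain. Once $K$ is produced one either applies the finite-index argument above to $K$, or invokes Theorem \ref{MainCM} directly for the pair $(G,K)$. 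Verifying that only finitely many witnessing elements need be added at each layer --- equivalently, that the process terminates in a finitely generated $K$ --- is the hardest technical step and is where the commensuration data at every level of the series must be leveraged carefully.

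For the second conclusion, the same induction runs in parallel, but the finitely generated subgroup $K$ built above need not be finitely presented even when $H$ is. This is precisely where the paper's new notion of simple connectivity at $\infty$ for finitely generated groups is needed: it replaces the $2$-cell fillings of a finite presentation complex used in the Conner-Mihalik argument by fillings in a Cayley $2$-complex built from a possibly infinite presentation, and the semistability conclusion from the first part of the theorem is used to ensure that the fundamental group of the end is base-ray independent so that these fillings are meaningful. With this framework in place, simple connectivity at $\infty$ should propagate up the subcommensurated series in the same way semistability does, with the 1-ended finitely presented hypothesis on $H$ supplying the base-case fillings via the simple connectivity part of Theorem \ref{MainCM}.
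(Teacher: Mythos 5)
There is a genuine gap in the inductive step, and it sits exactly where you flag it. In the case $[G:Q_k]=\infty$ you propose to manufacture a finitely generated subgroup $K$ with $H\leq K\leq Q_k$ that is \emph{commensurated in $G$}, and then invoke Theorem \ref{MainCM} for the pair $(G,K)$. The termination of your layer-by-layer witness-adjoining process is not a technical verification to be deferred --- it is the crux, and there is no reason it terminates: each time you enlarge $K$ by adjoining elements witnessing commensuration under the generators of $G$, the enlarged group requires new witnesses, and the union over all stages is a ``commensurated closure'' that need not be finitely generated. The paper never produces such a $K$ and its argument is designed precisely to avoid needing one. It sets $A=\langle\mathcal H\cup\bigcup_{s\in\mathcal S^{\pm 1}}\mathcal A_s\rangle\leq Q_k$, where $\mathcal A_s$ is a finite generating set for $Q_k\cap s^{-1}Hs$ (finitely generated and of finite index in $s^{-1}Hs$ by Lemma \ref{SUB2}); $A$ is \emph{not} claimed to be commensurated in $G$. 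The relevant dichotomy is on $[A:H]$, not on $[G:Q_k]$: if $[A:H]<\infty$ then $H$ itself is commensurated in $G$ (Lemma \ref{FI}) and Theorem \ref{MainCM} applies to $(G,H)$; if $[A:H]=\infty$ then intersecting the chain with $A$ (Lemma \ref{SUB}) yields the strictly shorter series $H\prec Q_1\cap A\prec\cdots\prec Q_k\cap A=A$, induction gives that $A$ is 1-ended and semistable, and then a direct proper-homotopy argument in $\Gamma_{(G,\mathcal A\cup\mathcal S)}(P\cup R)$ --- using the conjugation 2-cells $a=s^{-1}a_ss$ to convert $\mathcal A_s$-rays across $s$-labeled edges into $\mathcal H$-rays and patching homotopies along an arbitrary ray --- shows all proper rays in $G$ are properly homotopic. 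Semistability of $G$ is proved by hand, not by citing Theorem \ref{MainCM} for an intermediate commensurated subgroup.

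Your sketch of the simple connectivity part inherits the same missing mechanism and is otherwise too thin to assess as a proof. The paper's argument embeds $G$ in a finitely presented group $W$, introduces for each $g\in G$ the finitely generated groups $B_g=gAg^{-1}\cap Q_k$ and the unions $B_j$ over $|g|\leq j$, shows each is simply connected at $\infty$ by the same induction, and then, given a $\mathcal G$-loop $\alpha$ of length $\ell$ far from the basepoint, uses the semistability of $A$ together with the conjugation relations to sweep $\alpha$ out to a loop $\beta$ every vertex of which lies within $\ell+N_\ell$ of $B_\ell$, which is then killed by Lemma \ref{equiv} applied to $B_\ell$. The reason a whole family $B_j$ indexed by word length is needed --- rather than a single subgroup --- is the failure of uniformity recorded in Remark 1: Lemma \ref{equiv} controls loops near finitely many cosets of a fixed subgroup, not near all cosets simultaneously. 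Your proposal does not engage with this, nor with where the semistability established in the first part is actually consumed (namely, in constructing the sweeping homotopy $\hat H$ inside the cosets $v\Gamma_{(A,\mathcal A)}(R')$).
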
  

In the next section we define what it means for a finitely generated group to be simply connected at $\infty$ (a strict generalization of simple connectivity at $\infty$ for finitely presented groups).

\begin{example}
In \cite{M5}, short exact sequences are produced for each $n>0$, of the form:
$$ 1\to H\to (\mathbb Z^n\ast \mathbb Z)\times (\mathbb Z^n\ast \mathbb Z)\to\mathbb Z^n\to 1$$
where $H$ is 1-ended and finitely generated. The group $(\mathbb Z^n\ast \mathbb Z)\times (\mathbb Z^n\ast \mathbb Z)$ is not simply connected at $\infty$ and the group $Z^n$ is $(n-2)$-connected at $\infty$. 
These elementary examples shows that the finitely presented hypothesis on $H$ in Theorems \ref{J} and \ref{MainA}, cannot be relaxed.
\end{example}

The remainder of the paper is organized as follows: In $\S 2$ the working definitions and notation are established. We introduce our definition of a subgroup being simply connected at infinity inside an overgroup. This definition is then used to define the simple connectivity at $\infty$ of a finitely generated group. We end $\S 2$ with an important technical lemma.

In $\S 3$, we prove the semistability part of our main theorem. This is an induction argument that starts with base case given by Theorem \ref{MainCM}. 

In $\S4$, we prove the simply connectivity at $\infty$ part of our main theorem. This is also an induction argument that starts with the base case given by the simple connectivity part of Theorem \ref{MainCM}.  The semistability result of $\S 3$ is used in conjunction with Lemma  \ref{equiv} to set up the proof of the simple connectivity part of Theorem \ref{MainA}.

\section{Definitions and a Technical Lemma}
R. Geoghegan's book \cite {G} is a general reference to all that is in this section. A continuous function $f:X\to Y$ is {\it proper} if for each compact subset $C$ of $Y$, $f^{-1}(C)$ is compact in $X$. A proper map $r:[0,\infty)\to X$ is called a {\it ray} in $X$.  If $K$ is a locally finite, connected CW-complex, then one can define an equivalence
relation $\sim$ on the set $A$ of all rays in $ K$ by setting $r \sim s$ if and only if
for each compact set $C \subset K$, there exists an integer $N(C)$ such that $r([N(C),\infty))$ and
$s([N(C), \infty))$ are contained in the same unbounded path component of $K -C$ (a path
component of $K-C$ is {\it unbounded} if it is not contained in any compact subset of $K$). An equivalence class of $A/\sim$ is called {\it an end of} $K$, the set of equivalence classes of $A/\sim$ is called {\it the set of ends of} $K$ and two rays in $K$, in the same equivalence class, are said to {\it converge to the same end}.
The cardinality of $A/\sim$, denoted by $e(K)$, is the {\it number of ends of} $K$.

If G is a finitely generated group with generating set $\mathcal S$, then the {\it Cayley graph of $G$ with respect to $\mathcal S$}, denoted $\Gamma_{(G,\mathcal S)}$, has vertex set $G$ and an edge between vertices $v$ and $w$ if $vs=w$ for some $s\in\mathcal S$. We define the {\it number of ends of $G$}, denoted by $e(G)$, to be the number of ends of the Cayley graph of $G$ with respect to a finite generating set.
(In particular, $e(G) = e(\Gamma_{(G,\mathcal S)}$). This definition is independent of the choice of finite generating set for $G$.
If $G$ is finitely generated, then $e(G)$ is either 0, 1, 2, or is infinite (in which case it has the
cardinality of the real numbers). We let $\ast$ denote the basepoint of $\Gamma_{(G,\mathcal S)}$, which corresponds to the identity of $G$.

If $f$ and  $g$ are rays in $K$, then one says that $f$ and $g$ are {\it properly homotopic} if there is a proper map
$H : [0,1] \times [0,\infty) \to K$ such that
$H\vert_{\{0\}\times[0,\infty)} = f$ and $H\vert_{\{1\}\times[0,\infty)} = g$. If $f(0)=g(0)=v$ and $H\vert _{[0,1]\times \{0\}}=v$, one says $f$ and $g$ are {\it properly homotopic relative to $v$} (or $rel \{v\}$).

\begin{definition} \label{defss} A locally finite, connected CW-complex $K$ is {\it semistable at $\infty$} if any two rays in $K$ converging to the same end are properly homotopic. The space $K$ is {\it simply connected at $\infty$} if for any compact set $C\subset K$ there is a compact $D\subset K$ such that loops in $K-D$ are homotopically trivial in $K-C$.
\end{definition}

In a locally finite CW complex, any ray is properly homotopic to an edge path ray.  So in order to show semistability in such a complex, it is enough to prove edge path rays converging to the same end are properly homotopic. 

Theorem 2.1 of  \cite{M1}, and  Lemma 9 of \cite{M2},  provide several equivalent notions of semistability.  The space considered in \cite{M1} is simply connected, but simple connectivity is not important in that argument. A slight modification of proofs give the following result. (See \cite{CM}.)

\begin{theorem}\label{ssequiv} 
Suppose $K$ is a locally finite, connected and 1-ended CW-complex. Then the following are equivalent:
\begin{enumerate}
\item $K$ is semistable at $\infty$.
\item For any ray $r:[0,\infty )\to K$ and compact set $C$, there is a compact set $D$ such that for any third compact set $E$ and loop $\alpha$ based on $r$ and with image in $K-D$, $\alpha$ is homotopic $rel\{r\}$ to a loop in $K-E$, by a homotopy with image in $K-C$.
\item For some (equivalently any) ray $r$ in $K$ and any collection of compact sets $C_i$ such that $\cup_{i=1}^\infty C_i=K$ and $C_{i-1}$ is a subset of the interior of $C_i$, the inverse system:
$$\pi_1(X-C_1,r)\leftarrow \pi_1(X-C_2,r)\leftarrow\ldots$$ 
with bonding maps induced by inclusion along $r$, is pro-isomorphic to an inverse system of groups with epimorphic bonding maps. 
\item For any compact set $C$ there is a compact set $D$ such that if $r$ and $s$ are rays based at $v$ and with image in $K-D$, then $r$ and $s$ are properly homotopic $rel\{v\}$, by a proper homotopy in $K-C$. 
\end{enumerate}
If $K$ is simply connected (or if a group acting by homeomorphisms on $K$, acts transitively on the vertices of $K$) then a fourth equivalent condition can be added to this list:

5. If $r$ and $s$ are rays based at $v$, then $r$ and $s$ are properly homotopic 

$\ \ \ $$rel\{v\}$. 
\end{theorem}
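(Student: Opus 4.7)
The plan is to establish the equivalences cyclically, (1) $\Rightarrow$ (2) $\Rightarrow$ (3) $\Rightarrow$ (4) $\Rightarrow$ (1), following the template in Theorem~2.1 of \cite{M1} and Lemma~9 of \cite{M2}, and simply verifying that the arguments given there do not require simple connectivity of $K$ and do not require that the 1-ended statements be upgraded beyond what is written. Since the theorem is essentially a compilation of known equivalences for the 1-ended locally finite setting, most of the work is bookkeeping: the substance lies in (1) $\Rightarrow$ (2) and (4) $\Rightarrow$ (1), where 1-endedness of $K$ is used in an essential way to route rays through common unbounded components of complements of compacta.

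For (4) $\Rightarrow$ (1), given two rays $r,s$ converging to the same end, I would pick an exhaustion $C_1\subset C_2\subset\cdots$ of $K$ by compacta with $C_i$ in the interior of $C_{i+1}$, and apply (4) to produce compacta $D_i$ such that rays based at a common vertex with image in $K-D_i$ are properly homotopic $\mathrm{rel}\{v\}$ in $K-C_i$. Using 1-endedness, tails of $r$ and $s$ eventually lie in the same unbounded component of $K - D_i$, and can be joined by an edge path $\alpha_i\subset K-D_i$. Concatenating the tail of $r$ with $\alpha_i$ yields a ray based at $r(N_i)$ that is then properly homotopic $\mathrm{rel}$-basepoint to the tail of $s$ in $K-C_i$ (after a short adjustment along $r$). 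Telescoping these homotopies over $i$ gives a proper homotopy from $r$ to $s$.

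For (1) $\Rightarrow$ (2), I would argue by contradiction: if (2) fails for some ray $r$ and compact $C$, then for each candidate compact $D_n$ (from an exhaustion) there is a loop $\alpha_n$ based on $r$ with image in $K-D_n$ that is not homotopic $\mathrm{rel}\{r\}$ to a loop in $K-E_n$ through a homotopy in $K-C$, for some $E_n$. From these defects I would build two rays $r'$ and $s'$ converging to the same end (by splicing $r$ with the loops $\alpha_n$ and pushing out via unbounded components afforded by 1-endedness) that are not properly homotopic, contradicting (1). The equivalence (2) $\Leftrightarrow$ (3) is the standard reformulation: epimorphic bonding maps in an inverse system is equivalent to the Mittag--Leffler condition, and (2) is precisely the statement that for each $C_N$ the image of $\pi_1(K-C_M, r) \to \pi_1(K-C_N, r)$ stabilizes as $M\to\infty$ and is realized within $\pi_1(K-C, r)$ via the homotopy.

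For the final clause, assuming $K$ is simply connected (or admits a vertex-transitive action by homeomorphisms), I would deduce (5) from (1) by observing that any two rays $r,s$ based at $v$ converge to the same end (using 1-endedness) and so are properly homotopic by (1); the simple connectivity (or equivariance) allows one to absorb the freedom in the basepoint, upgrading the proper homotopy to one $\mathrm{rel}\{v\}$. The main obstacle throughout is the careful interleaving in (1) $\Rightarrow$ (2) and (4) $\Rightarrow$ (1), where one must simultaneously keep homotopies inside a prescribed complement $K-C$ while pushing loops arbitrarily far out; the inductive telescope construction described above is what makes this work, and is precisely the point at which the 1-ended hypothesis is used to guarantee that tails of the various rays and loops can be joined inside the required complements.
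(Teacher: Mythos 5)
The paper does not actually prove Theorem \ref{ssequiv}: it is quoted from Theorem 2.1 of \cite{M1}, Lemma 9 of \cite{M2} and \cite{CM}, with the remark that the simple connectivity assumed in \cite{M1} is inessential. Measured against those arguments, your outline has one structural gap. You announce the cycle $(1)\Rightarrow(2)\Rightarrow(3)\Rightarrow(4)\Rightarrow(1)$, but you only sketch $(4)\Rightarrow(1)$, $(1)\Rightarrow(2)$ and $(2)\Leftrightarrow(3)$; nothing is said about how to get \emph{into} condition (4). The implication $(2)\Rightarrow(4)$ (equivalently $(3)\Rightarrow(4)$) is arguably the most substantial step in the cited proofs, and you explicitly locate ``the substance'' elsewhere. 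Given two rays based at $v$ with image in $K-D$, the proper homotopy $rel\{v\}$ inside $K-C$ is built by filling an infinite ladder of squares, and each square can only be filled by \emph{iterating} the loop-pushing property of (2): the boundary loop is homotoped in $K-C$ to a loop in $K-E_1$, that loop is homotoped in $K-E_1$ to a loop in $K-E_2$, and so on, properness of the infinite concatenation coming from the nesting of the $E_i$. Without some version of this, the four conditions are not linked into a cycle and the theorem is not proved; note also that it is exactly condition (4) (and its consequence (5)) that Sections 3 and 4 of the paper use.

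Two smaller points. In $(1)\Rightarrow(2)$ the contradiction is not that the spliced rays ``are not properly homotopic''; by (1) they \emph{are}, and the work is to extract from that proper homotopy $H$ (using $H^{-1}(C)\subset[0,1]\times[0,T]$ and properness to choose sub-squares clear of $E_n$) a homotopy $rel\{r\}$ pushing an individual inserted loop $\alpha_n$ out of $E_n$ inside $K-C$; isolating a single $\alpha_n$ from the concatenated insertions takes care and should be spelled out. And for condition (5) in the vertex-transitive, non-simply-connected case, ``equivariance absorbs the freedom in the basepoint'' is not a one-line step: one cannot cap off the basepoint track with a disk, and the arguments in \cite{M4} and \cite{CM} instead translate homotopies by the group action and combine them with condition (4). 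The remainder of your outline --- the telescope for $(4)\Rightarrow(1)$ and the identification of (2) with the Mittag--Leffler condition in (3) --- matches the cited proofs.
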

 
If finite connected CW complexes  $X$ and $Y$ have isomorphic fundamental groups,  then  the universal cover of $X$ is semistable (simply connected) at $\infty$ if and only if the universal cover of $Y$ is semistable (simply connected)  at $\infty$. This result can be seen from the early work of  F. E. A. Johnson \cite{FEA} and \cite{FEA2}, or the proof of Theorem 3 of \cite{LR}. For a complete argument see the first three sections of Chapter 5 of R. Geoghegan's book \cite{G}.

\begin{definition} \label{defss2}
If $G$ is a 1-ended, finitely presented group and, $X$ is some (equivalently any) finite, CW-complex with fundamental group $G$, then we say $G$ {\it  is semistable at $\infty$} if the universal cover of $X$ is semistable at $\infty$. We say $G$ is {\it simply connected at $\infty$} if the universal cover of $X$ is simply connected at $\infty$. 
\end{definition}

The notion of semistabilty for a finitely generated group was first defined in \cite{M4}. We give
the definition for 1-ended groups since this is the case that concerns us. Suppose $G$ is a 1-ended finitely generated group with generating set $\mathcal S\coloneqq \{g_1, g_2,\ldots , g_n\}$ and let $\Gamma_{
(G,\mathcal S)}$ be the Cayley graph of $G$ with respect to this generating set.  Suppose $\{\alpha_1, \alpha_2,\ldots , \alpha_m\}$ is a finite set of relations in $G$ written in the letters $\{g_1^\pm, g_2^\pm,\ldots , g_n^\pm\}$.  For any vertex $v\in \Gamma_{(G,\mathcal S)}$, there is an edge path cycle labeled $\alpha_i$ at $v$.   The 2-dimensional CW-complex $\Gamma_{(G,\mathcal S)}(\alpha_1,\ldots , \alpha_m)$ is obtained by attaching to each vertex of $\Gamma_{(G,\mathcal S)}$, $2$-cells corresponding to the relations $\alpha_1,\ldots ,\alpha_n$.

We show in  \cite{M4}, that if $\mathcal S$ and $\mathcal T$ are finite generating sets for the group $G$ and there are finitely many $\mathcal S$-relations $P$ such that $\Gamma_{(G,\mathcal S)}(P)$ is semistable at $\infty$, then there are finitely many $\mathcal T$-relations $Q$ such that $\Gamma_{(G,\mathcal T)}(Q)$ is semistable at $\infty$. Hence the following definition:

\begin{definition} \label{defss3}
A finitely generated group  {\it $G$ is semistable at $\infty$} if  for some (equivalently any) finite generating set $\mathcal S$ for $G$ and finite set of $\mathcal S$-relations $P$ the complex $\Gamma_{(G,\mathcal S)}(P)$ is semistable at $\infty$. 
\end{definition}
      
Note that if $G$ has finite presentation $\langle \mathcal S:P\rangle$, then $G$ is semistable at $\infty$ with respect to Definition \ref{defss2} if and only if $G$ is semistable at $\infty$ with respect to Definition \ref{defss3} if and only if $\Gamma_{(G,\mathcal S)}(P)$ is semistable at $\infty$.

The following definition defines what it means for a finitely generated subgroup of a finitely presented group to be simply connected at $\infty$ relatively to the finitely presented over group. 

\begin{definition}\label{SCin} 
A finitely generated subgroup $A$ of a finitely presented group $G$ is {\it simply connected at $\infty$ in $G$ (or relative to $G$)} if for some (equivalently any by Lemma \ref{equiv} with $N=0$) finite presentation $\langle \mathcal A,\mathcal B;R\rangle$ of the group $G$  (where $\mathcal A$ generates $A$ and $\mathcal A\cup \mathcal B$ generates $G$), the 2-complex $\Gamma_{(G,\mathcal A\cup \mathcal B)}(R)$ has the following property:

Given any compact set $C\subset \Gamma_{(G,\mathcal A\cup \mathcal B)}(R)$ there is a compact set $D\subset \Gamma_{(G,\mathcal A\cup \mathcal B)}(R)$ such that any edge path loop in  $\Gamma_{(A,\mathcal A)}-D$ is homotopically trivial in $\Gamma_{(G,\mathcal A\cup \mathcal B)}(R)-C$. 
\end{definition}

In order to define what  it means for a finitely generated group $G$ to be simply connected at $\infty$, we must know that $G$ embeds in some finitely presented group. In 1961,G. Higmann proved:

\begin{theorem} (Higmann \cite{H}) A finitely generated infinite group $G$ can be embedded in a finitely presented group if and only if the set of relators of $G$ (as a set of freely reduced words in the generators) is recursive enumerable.
\end{theorem}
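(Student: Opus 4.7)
The plan is to prove Higman's theorem in two directions of very unequal difficulty. The easy direction — that a finitely generated subgroup $G$ of any finitely presented group $H=\langle \mathcal{S}\mid R\rangle$ has a recursively enumerable relator set — follows at once from the observation that a word $w$ in the generators of $G$ is trivial in $G$ if and only if it is trivial in $H$, and the trivial words of $H$ are the normal closure of the finite set $R$, which can be mechanically enumerated by inserting conjugates of relators, deleting them, and freely reducing, starting from the empty word.

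For the hard direction — embedding an arbitrary finitely generated $G$ with recursively enumerable relator set into a finitely presented group — I would follow Higman's original route through \emph{benign subgroups}. Call $H\leq K$ benign in $K$ if some finitely presented $L\supseteq K$ contains a finitely generated $M$ with $H=K\cap M$. The first substantial block of work is to show that the class of benign subgroups of a finitely generated group is closed under intersections with finitely generated subgroups, under finite joins, and under the partial isomorphisms induced by finitely generated subgroups of overgroups. Each of these closure properties is proved by a carefully chosen HNN extension or amalgamated free product, with Britton's Lemma invoked to rule out unintended identifications.

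The crucial — and hardest — step is Higman's lemma that every recursively enumerable subgroup of a free group of finite rank is benign. The approach is to simulate, inside a tower of HNN extensions, a Turing machine that enumerates the subgroup: tape configurations are encoded as elements of a suitable free product, individual state transitions become conjugations by fresh stable letters, and Britton's Lemma is used to verify that the group-theoretic reductions exactly mirror the step-by-step behavior of the machine. The main obstacle is precisely here — arranging the encoding so that the intersection $F\cap M$ in the resulting finitely presented group is exactly the prescribed recursively enumerable subgroup, with neither accidental collapses nor extraneous elements surviving. This is the technical core of Higman's argument and requires delicate bookkeeping of which normal forms are admissible at each level of the HNN tower.

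Once that lemma is available, the theorem is immediate. Write $G=F/N$ with $F$ free of finite rank and $N$ the normal closure of the given recursively enumerable set of relators; then $N$ itself is recursively enumerable in $F$, hence benign, so there is a finitely presented $L\supseteq F$ and a finitely generated $M\leq L$ with $N=F\cap M$. Replacing $M$ by its normal closure (using a further benignity step to keep the ambient group finitely presented) and passing to the quotient yields a finitely presented group into which $G=F/N$ embeds as desired.
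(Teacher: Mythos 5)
The paper offers no proof of this statement: it is Higman's 1961 embedding theorem, quoted with a citation to \cite{H} purely as background so that Definition of simple connectivity at $\infty$ for a finitely generated group makes sense. So there is nothing in the paper to compare your argument against, and your proposal must be judged on its own as a proof of Higman's theorem. The easy direction is fine: triviality of a word in $G$ is equivalent to triviality in the finitely presented overgroup, whose trivial words are the normal closure of a finite set and hence recursively enumerable. The hard direction correctly identifies the architecture of Higman's original argument (benign subgroups, closure properties via HNN extensions and Britton's Lemma, the key lemma that recursively enumerable subgroups of finite-rank free groups are benign), but it does not prove the key lemma --- you describe the Turing-machine encoding and then say the ``delicate bookkeeping'' is the main obstacle. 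That lemma is essentially the entire content of the theorem, so as it stands this is a roadmap rather than a proof.

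There is also a concrete error in your final step. From ``$N$ is benign in $F$,'' witnessed by a finitely presented $L\supseteq F$ and a finitely generated $M\leq L$ with $F\cap M=N$, you propose to pass to the quotient of $L$ by the normal closure of $M$. But the normal closure of $M$ in $L$ may intersect $F$ in a subgroup strictly larger than $N=F\cap M$, so the induced map $F/N\to L/\langle\langle M\rangle\rangle$ need not be injective; nothing in benignity controls $F\cap\langle\langle M\rangle\rangle$. The correct deduction of ``$F/N$ embeds in a finitely presented group'' from ``$N$ is benign in $F$'' is Higman's rope trick: a further amalgamated-product/HNN construction built from $L$, $M$, and auxiliary free groups, arranged via Britton's Lemma so that a copy of $F/N$ survives intact. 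Your parenthetical ``further benignity step'' gestures in this direction, but as written the last paragraph does not establish the embedding.
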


\begin{definition}\label{SCfg} 
A finitely generated and recursively presented group $A$ is {\it simply connected at $\infty$} if for any finitely presented group $G$ and subgroup $A'$ isomorphic to $A$, the subgroup $A'$ is simply connected at $\infty$ in $G$. 
\end{definition}

Suppose that $G$ is a finitely presented group and that $G$ satisfies the simply connected at $\infty$ condition of Definition \ref{defss2},
then $G$ satisfies Definition \ref{SCfg}, and there is no ambiguity. Futhermore, any finitely generated subgroup of $G$ is simply connected at $\infty$ in $G$. 

We conclude this section with Lemma \ref{equiv}, but first some terminology. Suppose $\langle \mathcal S:R\rangle$ is a finite presentation for a group $G$. If $A$ is a subcomplex of $\Gamma_{(G,\mathcal S)}(R)$, then $St(A)$ is the subcomplex of $\Gamma_{(G,\mathcal S)}(R)$ whose vertices $V(St(A))$ are the vertices of $A$ along with each vertex of $\Gamma_{(G,\mathcal S)}(R)$ that is connected to a vertex of $A$ by an edge. The edges $E(St(A))$ of $St(A)$ are all edges of $A$ and all edges of $\Gamma_{(G,\mathcal S)}(R)$, both of whose vertices are contained in $V(St(A))$. The 2-cells $F(St(A))$ of $St(A)$are all 2-cells of $A$ along with all 2-cells $F$, such that all vertices of $F$ belong to $V(St(A))$. 
If $A$ is an arbitrary subset of $\Gamma_{(G,\mathcal S)}(R)$ then let $\hat A$ be the smallest subcomplex of $\Gamma_{(G,\mathcal S)}(R)$ containing $A$ and define $St(A)$ to be $St(\hat A)$. 

\begin{lemma} \label{bump} 
(1) Suppose $A$ and $B$ are subcomplexes of $\Gamma_{(G,\mathcal S)}(R)$ and $St(A)\cap B\ne\emptyset$. Then $A\cap St(B)\ne\emptyset$. 

(2) Suppose $A$ is a subcomplex of $\Gamma_{(G,\mathcal S)}(R)$ and $B$ is an arbitrary subset of $\Gamma_{(G,\mathcal S)}(R)$ and $St(B)\cap A\ne\emptyset$ then $St^{L+1}(A)\cap B\ne\emptyset$ where $L$ is the length of the longest relation in $R$. 
\end{lemma}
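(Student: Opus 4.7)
For part (1), my plan is to exploit the fact that the intersection of two subcomplexes is again a subcomplex. If $St(A)\cap B$ is non-empty it contains a $0$-cell, giving a vertex $v\in V(St(A))\cap V(B)$. By the definition of $St$, either $v\in V(A)$ (in which case $v\in A\cap V(B)\subseteq A\cap St(B)$) or $v$ has an edge-neighbour $a\in V(A)$. In the latter case $a\in V(A)$ is itself an edge-neighbour of the vertex $v\in V(B)$, so $a\in V(St(B))$, and $a\in A\cap St(B)$.

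For part (2), I plan to first reduce to the subcomplex case. By the definition given in the paper, $St(B)=St(\hat B)$, so the hypothesis says $St(\hat B)\cap A\ne\emptyset$. Since $\hat B$ is a subcomplex, part (1) supplies a vertex $w\in V(\hat B)\cap V(St(A))$, i.e.\ a vertex of $\hat B$ lying at distance at most $1$ from $V(A)$ in the Cayley graph $\Gamma_{(G,\mathcal S)}$. The key observation is then to unpack what membership in $V(\hat B)$ means: $w$ must be a vertex of some closed cell $\bar c$ whose open interior $c$ meets $B$. Fix a point $p\in c\cap B$; the problem now reduces to showing $\bar c\subseteq St^{L+1}(A)$, for then $p$ is the required point of $B\cap St^{L+1}(A)$.

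This last step is a case analysis on $\dim c$. If $\dim c=0$ then $c=\{w\}$ and $p=w\in V(St(A))\subseteq St^{L+1}(A)$. If $\dim c=1$ then the other endpoint $w'$ of $c$ lies within distance $2$ of $V(A)$ by going through $w$, so both endpoints of $c$ are in $V(St^{2}(A))$ and $\bar c\subseteq St^{2}(A)\subseteq St^{L+1}(A)$. The essential case is $\dim c=2$: here $\bar c$ is attached along a boundary loop of length $\ell\le L$, so every vertex of $\bar c$ lies on that loop and is within distance $\ell\le L$ of $w$ in $\Gamma_{(G,\mathcal S)}$, hence within distance $L+1$ of $V(A)$. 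Therefore $V(\bar c)\subseteq V(St^{L+1}(A))$, and since $St^{L+1}(A)$ is a subcomplex that contains all vertices of $\bar c$, it contains the closed $2$-cell $\bar c$ itself.

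The step I expect to be the main obstacle is the bookkeeping around the passage from $B$ to $\hat B$: when $B$ intersects only the interior of a $2$-cell $c$, the nearest vertex $w\in V(\hat B)$ supplied by part (1) can be as far as the full boundary-length of $c$ away from a representative point of $B$. Making this cost precise is exactly what forces the exponent $L+1$ (rather than something smaller) and dictates why $L$ must be taken to be the length of the longest relation; the $1$-dimensional and $0$-dimensional cases are handled uniformly within the same bound.
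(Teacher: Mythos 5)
Your proof is correct and follows essentially the same route as the paper's: locate a vertex of $\hat B$ within distance $1$ of $A$, then observe that a point of $B$ in an incident edge or $2$-cell lies within $St^{L}$ of that vertex, yielding the $St^{L+1}$ bound. The only cosmetic difference is that you invoke part (1) to produce the vertex $w\in \hat B\cap St(A)$, whereas the paper takes $v\in A\cap St(\hat B)$ directly and splits on whether $v\in\hat B$ or is adjacent to a vertex of $\hat B$; the estimates are identical.
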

\begin{proof}
Case (1). If $St(A)\cap B\ne\emptyset$ then there is a vertex $v\in St(A)\cap B$. If $v\in A$ then we are finished. Otherwise, there is a vertex $w\in A$ and an edge from $v$ to $w$. Then $w\in A\cap St(B)$

Case (2). Let $v$ be a vertex in $St(B)\cap A=St(\hat B)\cap A$. If $v\in B$ we are finished. Otherwise, $v\in \hat B$ or $v$ is adjacent to a vertex $w\in \hat B$. If $v\in \hat B$ then there is an edge $e$ containing a point $b\in B$ and  $v$ is  a vertex of $e$, or there is a 2-cell $F$ containing a point $b\in B$ and $v$ is a vertex of $F$. In either case, $b\in St^L(v)$, so $b\in B\cap St^L(A)$.  
If $v$ is adjacent to a vertex $w\in \hat B$ then as above, there is $b\in B\cap St^L(w)\subset B\cap St^{L+1}(A)$.
\end{proof}
The following technical lemma has a somewhat standard proof. 
\begin{lemma}\label{equiv} 
Suppose $A$ is a finitely generated subgroup of the finitely presented group $G$. Then $A$ is simply connected at $\infty$ in $G$ if and only if:

\noindent $(\dagger)$ For $\langle \mathcal S;R\rangle$ an arbitrary finite presentation for $G$, $N\geq 0$ an integer and $C$ a compact subset of $\Gamma_{(G,\mathcal S)}(R)$, there is a compact set $D(C,N)\subset \Gamma$ such that if $\alpha$ is an edge path loop in $\Gamma-D$ and each vertex of $\alpha$ is within $N$ of some vertex  of $A(\subset \Gamma)$ then $\alpha$ is homotopically trivial in $\Gamma-C$. 
\end{lemma}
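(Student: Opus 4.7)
The easy direction is $(\dagger)\Rightarrow$ \emph{simple connectivity at $\infty$ of $A$ in $G$}: applied to a finite presentation $\langle\mathcal A,\mathcal B;R\rangle$ of $G$ with $\mathcal A$ generating $A$ and taken with $N=0$, $(\dagger)$ is precisely the condition in Definition \ref{SCin}, since edge-path loops in $\Gamma_{(A,\mathcal A)}-D$ have all vertices in $A$ (hence within $0$ of $A$).

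For the converse, fix a presentation $\langle\mathcal A,\mathcal B;R_0\rangle$ witnessing simple connectivity at $\infty$ of $A$ in $G$, and write $\Gamma_0$ for its Cayley $2$-complex. I would prove $(\dagger)$ for $\Gamma_0$ (all $N$) first, then transfer to an arbitrary finite presentation of $G$. For $\Gamma_0$ itself, the plan is to replace an edge-path loop $\alpha=v_0e_1v_1\cdots e_kv_k$ whose vertices lie within $N$ of $A$ by an ``$A$-shadow'' $\alpha'\subset\Gamma_{(A,\mathcal A)}$ close to $\alpha$: choose $a_i\in A$ with $d(v_i,a_i)\le N$ (with $a_0=a_k$) and a geodesic $\gamma_i$ from $v_i$ to $a_i$; observe that $a_{i-1}^{-1}a_i$ lies in $A$ and has $\Gamma_0$-length at most $2N+1$, and since only finitely many elements of $G$ satisfy this, a uniform bound $M=M(N)$ controls the $\mathcal A$-word length of a word $w_i$ representing $a_{i-1}^{-1}a_i$; set $\alpha':=w_1\cdots w_k$, which is an edge-path loop in $\Gamma_{(A,\mathcal A)}$. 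Each ``bigon'' $\gamma_{i-1}^{-1}e_i\gamma_iw_i^{-1}$ has length $\le 2N+1+M$, so by simple connectivity of $\Gamma_0$ together with cocompactness of the $G$-action, its null-filling has diameter at most some $K'=K'(N)$. Setting $D$ to be a sufficiently large star-neighborhood of $C\cup D_0$, where $D_0$ is supplied by the simple-connectivity-at-infinity hypothesis applied to a suitable enlargement of $C$, one arranges both that $\alpha'\subset\Gamma_{(A,\mathcal A)}-D_0$ and that the concatenated bigon fillings avoid $C$; composing with the hypothesized null-homotopy of $\alpha'$ then yields a null-homotopy of $\alpha$ in $\Gamma_0-C$.

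Finally, to transfer $(\dagger)$ from $\Gamma_0$ to an arbitrary finite presentation $\Gamma=\Gamma_{(G,\mathcal S)}(R)$, form the common Tietze presentation on generators $\mathcal S\cup\mathcal A\cup\mathcal B$ with relators $R\cup R_0$ together with bounded-length ``change-of-generator'' relators, and work in the resulting complex $\Gamma_T$ into which both $\Gamma$ and $\Gamma_0$ embed as subcomplexes. A standard disk-diagram rewriting argument — replacing each $2$-cell of $\Gamma_T\setminus\Gamma$ (resp.\ $\Gamma_T\setminus\Gamma_0$) by a bounded-diameter filling in $\Gamma$ (resp.\ $\Gamma_0$), and likewise rewriting edge paths — transfers $(\dagger)$ between $\Gamma_0$, $\Gamma_T$, and $\Gamma$ with only a uniformly-bounded inflation of the parameter $N$ and of the compact sets. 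I expect the main technical obstacle to be precisely this bookkeeping: tracking how the star-enlargements (controlled by Lemma \ref{bump}), bigon-filling diameters, and change-of-presentation constants compose, so that the resulting $D$ is indeed compact and the constructed null-homotopies land in $\Gamma-C$ rather than merely $\Gamma_T-C$. The key geometric input throughout is the cocompactness of the $G$-action on every Cayley $2$-complex in sight, which is what makes $M(N)$, $K'(N)$, and all relator-filling diameters uniformly bounded.
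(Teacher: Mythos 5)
Your proposal is correct and follows essentially the same route as the paper: the $N=0$ specialization for the easy direction, the shadowing of a loop whose vertices lie near $A$ onto an $\mathcal A$-edge-path loop using the finiteness of balls of radius $2N+1$ (the paper's constant $N_1$), the bounded-diameter filling of the resulting short bigons via cocompactness (the paper's $N_2$), and a bounded-displacement comparison between the two Cayley $2$-complexes. The only organizational difference is that the paper changes presentation first --- via a pair of proper $G$-equivariant maps $f_1,f_2$ whose composites move points a uniformly bounded distance (Claim \ref{C1}, controlled by Lemma \ref{bump}) --- and then shadows onto the image $f_1(\Gamma_{(A,\mathcal A)})$ of pseudo-vertices and pseudo-edges, whereas you shadow first in the witnessing presentation and then transfer via a Tietze overcomplex; the two orderings and transfer mechanisms are interchangeable and involve the same bookkeeping.
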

\begin{proof}
If condition $(\dagger)$ holds with $N=0$ then clearly $A$ is simply connected at $\infty$. For the converse assume $A$ is simply connected at $\infty$ and 
 $\langle \mathcal A, \mathcal B:T\rangle$ is a presentation for $G$ satisfying the conditions of Definition \ref{SCin}. Define $\Gamma_1\coloneqq \Gamma_{(G,\mathcal A\cup \mathcal B)}(T)$ and $\Gamma_2\coloneqq \Gamma_{(G,\mathcal S)}(R)$. Recall that the vertices of $\Gamma_1$ and of $\Gamma_2$  are both the elements of $G$. In order to avoid confusion if $v$ is a vertex of $\Gamma_1$ we denote by $v'$ the corresponding vertex of $\Gamma_2$. We define proper maps respecting the action of $G$,  $f_1:\Gamma_1\to \Gamma_2$ and $f_2:\Gamma_2\to \Gamma_1$ such that for each vertex $g\in G$ of $\Gamma_1$, $f_1(g)=g'$ and $f_2(g')=g$. If $e$ is an edge of $\Gamma_1$ with initial vertex $v$, terminal vertex $w$ and label $s\in \mathcal A\cup \mathcal B$, then choose an edge path $\tau_s$ in $\Gamma_2$ from $v'\coloneqq f_1(v)$ to $w'\coloneqq f_1(w)$. Define $f_1(e)$ to be $\tau_s$. If $g\in G$ define $f_1$ on $ge$ to be $g\tau_s$. Similarly define $f_2$ from the 1-skeleton of $\Gamma_2$ to the 1-skeleton of $\Gamma_1$. 
Let $M_1$ be the length of the longest path $\tau_s$ for $s\in \mathcal A\cup \mathcal B$ and $M_2$ be the length of the longest path $\tau_{s'}$ for $s'\in \mathcal S$. Note that if $e$ is an edge of $\Gamma_1$, with initial vertex $v$ and terminal vertex $w$, then $f_2f_1(e)$ is an edge path of length $\leq M_1M_2$ from $v$ to $w$, and similarly if $e$ is an edge of $\Gamma_2$. 

In particular, if $x$ is a point of an edge of $\Gamma_1$ then $f_2f_1(x)\in St^{M_1M_2}(x)$. Similarly if $x$ belongs to an edge of $\Gamma_2$. 

If $F$ is a 2-cell of $\Gamma_1$ then the boundary of $F$ is an edge path $\beta_F$ with edge labels the same as an element of $T$. Then $f_1(\beta_F)$ is an edge path loop in $\Gamma_2$. 
Choose $P_1>0$ so that if $F$ is any 2-cell of $\Gamma_1$ then the edge path loop $f_1(\beta_F)$ is homotopically trivial in $St^{P_1}(v')$ for any vertex $v'$ of $f_1(\beta_F)$. The map $f_1$ is defined so that $f_1|_F$ (the restriction of $f_1$ to any 2-cell $F$) realizes this homotopy and respects the action of $G$ on $\Gamma_1$ and $\Gamma_2$. Similarly map the 2-cells of $\Gamma_2$ to $\Gamma_1$ and choose $P_2$ for $f_2$.
Let $L$ be the length of the longest relator of $T\cup R$.

If $x$ is a point of a 2-cell $F$ of $\Gamma_1$ and $v$ is a vertex of $F$, then $f_1(x)\in St^{P_1}(f_1(v))=St^{P_1}(v')$. This means there is an edge path $\tau$ in $\Gamma_2$ of length $\leq P_1$ from $v'$ to a vertex $w'$ and $w'$ belongs to an edge $b$ or 2-cell $B$ containing $f_1(x)$. 

If $w'$ belongs to an edge $b$  then $f_2(f_1)(x)$ belongs to $f_2((\tau,b))$ an edge path of length $\leq M_2( P_1+1)$ that begins at $v$. In this case $f_2(f_1(x))\in St^{M_2(P_1+1)}(x)$. 

Otherwise, $w'$ belongs to a 2-cell $B$ containing $f_1(x)$ and $f_2f_1(x)$ belongs to $St^{P_2}(f_2(w'))=St^{P_2}(w)$.  Then $f_2(\tau)$ is an edge path of length $\leq P_1M_2$ from $v$ to $w$, and $f_2f_1(x)\in St^{P_1M_1+P_2}(v)$. As $x\in St^L(v)$, we have $f_2f_1(x)\in St^{P_1M_1+P_2+L}(x)$. Combining we have: 

\begin{claim}\label{C1} 
There is an integer $M$ such that if $x$ is a point of $\Lambda_1$ (respectively $\Lambda_2$) then $f_2(f_1(x))\in St^M(x)$ (respectively $f_1f_2(x)\in St^M(x)$).
\end{claim}

Let $\Gamma_3\coloneqq \Gamma_{(A,\mathcal A)}$ be the corresponding subgraph of $\Gamma_1$.  Then for any compact set $C$ in $\Gamma_1$ there is a compact set $D$ in $\Gamma_1$ so that any edge path loop in $\Gamma_3-D$, 
is homotopically trivial in $\Gamma_1-C$. Let $\Gamma_4=f_1(\Gamma_3)$. Then $A$ is a subset of the vertices of $\Gamma_4$ and we call these vertices the pseudo vertices of $\Gamma_4$. For each edge $e$ of $\Gamma_3$, $f_1(e)$ is an edge path of $\Gamma_4$ (connecting two pseudo vertices) that we call a pseudo edge of $\Gamma_4$. 

\begin{claim} \label{C2}
Given a compact set $C$ in $\Gamma_2$ there is a compact set $D_1(C)$ in $\Gamma_2$ such that  any pseudo edge path loop $\beta$ in $\Gamma_4-D_1$ is homomtopically trivial in $\Gamma_2-C$. 
\end{claim}
\noindent {\bf Proof:} Assume $C$ is a compact subcomplex of $\Gamma_2$. Then $St^{M+L}(f_2(C))$ is a compact subcomplex of $\Gamma_1$. (See Lemma \ref{bump} for the definition of $L$). As $\Gamma _1$ satisfies Definition \ref{SCin}, there is a compact subcomplex  $E$ of $\Gamma_1$ such that any edge path loop in $\Gamma_3-E$ is homotopically trivial in $\Gamma_1-St^{M+L}(f_2(C))$.

Choose $D_1$ a compact subcomplex of $\Gamma_2$ such that if $w\in G$ is a vertex of $E$ then $f_2(w)\coloneqq w'\in D_1$. 
If $\beta'$ is a pseudo edge path loop in $\Gamma_4-D_1$, let $\beta$ be an edge path loop in $\Gamma_3$ such that $f_1(\beta)=\beta'$. Note that no vertex of $\beta$ belongs to $E$ and so $\beta$ avoids $E$. Then there is a homotopy $H$ that kills $\beta$ in $\Gamma_1-St^{M+L}(f_2(C))$ and $f_1H$ kills $\beta'$ in $\Gamma_2$. It remains to show that the image of $f_1H$ avoids $C$. 
If $im(f_1H)\cap C\ne \emptyset$,  then $im(f_2f_1H)\cap  f_2(C)\ne \emptyset$.  By Claim \ref{C1}, $im(f_2f_1H)\subset St^{M}(im(H))$ and so $St^{M}(im(H))\cap f_2(C)\ne \emptyset$. By Lemma \ref{bump} (1) , $St(im(H))\cap St^{M-1}(f_2(C))\ne\emptyset$ and by Lemma \ref{bump} (2) $im(H)\cap St^{M+L}(f_2(C))\ne \emptyset$.  But $im(H)\cap St^{M+L}(f_2(C))=\emptyset$.
$\square$

Now we complete the proof of Lemma \ref{equiv}. Recall, $N\geq	0$ is an arbitrary fixed integer.
Choose $N_1$ such that if two pseudo vertices of $\Gamma_4$ are within $2N+1$ of one another in $\Gamma_2$ then there is a pseudo edge path of $\Gamma_2$-length $\leq N_1$  connecting them. Let $C$ be compact in $\Gamma_2$. 
Choose $N_2$ so that if $\tau$ is an edge path loop in $\Gamma_2$ of length $\leq N_1+2N+1$, then $\tau$ is homotopically trivial in $St^{N_2}(w')$ for any vertex $w'$ of $\tau$. Now suppose $\alpha$ is an edge path loop of $\Gamma_2-St^{N_2}(D_1(C))$ and each vertex of $\alpha$ is within $N$ of $A$ (the pseudo vertices of $\Gamma_4$). By the definition of $
N_2$,  $\alpha$ is homotopic to a pseudo edge path $\alpha'$ in $\Gamma_4-D_1$ by a homotopy in $\Gamma_2-D_1$. Since $\alpha'$ is homotopically trivial in $\Gamma_2-C$, $\alpha$ is as well.
\end{proof}

\begin{remark} 
Lemma \ref{equiv} implies the following. Suppose the finitely generated group $A$ is simply connected at $\infty$ in the finitely presented group $G$, $(\mathcal S,R)$ is a finite presentation for $G$, and  $v_1,\ldots, v_n$ are vertices of $\Gamma_{(G,\mathcal S)}(R)$. Then for any compact $C\subset \Gamma$ and integer $N\geq 0$ there is a compact set $D(C,N,\{v_0,\ldots ,v_n\})$ such that any loop in $\Gamma-D$, each of whose vertices are within $N$ of $v_iA$ for some $i\in \{1,\ldots ,n\}$, is homotopically trivial in $\Gamma-C$. 
What is not guaranteed is a compact set $D(C,N)$ satisfying the following: For all $v\in G$ and any edge path loop $\alpha$ in $\Gamma-D$ with each vertex of $\alpha$  within $N$ of $vA$, the loop $\alpha$ is homotopically trivial in $\Gamma-C$. 
\end{remark}

\section{Semistability}
For the remainder of the paper, we assume that $G$ is a finitely presented group, $H$ is an infinite finitely generated subgroup of infinite index in $G$ and (as in the statement of Theorem \ref{MainA}) $H$ is subcommensurated in $G$.
 $$H=Q_0\prec Q_1\prec \cdots \prec Q_{k}\prec Q_{k+1}= G.$$
Let $\mathcal H\coloneqq \{h_1,\ldots , h_n\}$ be a finite generating set for $H$, and suppose the group $G$ has generating set $\mathcal G\coloneqq \{h_1,\ldots , h_n, s_1,\ldots , s_m\}$. Let $\mathcal S\coloneqq\{s_1,\ldots, s_m\}$.

The following is Lemma 3.1 of \cite{CM2}.
\begin{lemma}\label{Intersect} 
Suppose $Q$ and $B$ are subgroups of the group $G$ and  $Q\prec G$, then $Q\cap B\prec B$.
\end{lemma}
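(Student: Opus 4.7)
The plan is to unwind the definition of commensuration and use the elementary fact that if $K$ is a finite index subgroup of $L$ and $M$ is any subgroup, then $K \cap M$ has finite index in $L \cap M$ (indeed, distinct cosets of $K \cap M$ in $L \cap M$ lie in distinct cosets of $K$ in $L$). Fix an arbitrary $b \in B$; the goal is to show that $b(Q \cap B)b^{-1} \cap (Q \cap B)$ has finite index both in $Q \cap B$ and in $b(Q \cap B)b^{-1}$.

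First I would simplify $b(Q \cap B)b^{-1}$. Since $b \in B$, conjugation by $b$ preserves $B$, so $b(Q \cap B)b^{-1} = bQb^{-1} \cap bBb^{-1} = bQb^{-1} \cap B$. Consequently
\[
b(Q \cap B)b^{-1} \cap (Q \cap B) \;=\; bQb^{-1} \cap Q \cap B.
\]

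Next I would invoke the hypothesis $Q \prec G$ applied to $b \in G$: the subgroup $bQb^{-1} \cap Q$ has finite index in both $Q$ and $bQb^{-1}$. Now I apply the elementary fact above. Taking $L = Q$, $K = bQb^{-1} \cap Q$, and $M = B$ shows $bQb^{-1} \cap Q \cap B$ has finite index in $Q \cap B$. Taking $L = bQb^{-1}$, $K = bQb^{-1} \cap Q$, and $M = B$ shows $bQb^{-1} \cap Q \cap B$ has finite index in $bQb^{-1} \cap B = b(Q \cap B)b^{-1}$. Combining these with the identification of the intersection above yields $Q \cap B \prec B$.

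There is no real obstacle here; the result is a direct bookkeeping exercise once one observes that conjugation by $b \in B$ fixes $B$ setwise, so the intersection $b(Q \cap B)b^{-1} \cap (Q \cap B)$ coincides with $(bQb^{-1} \cap Q) \cap B$ and the finite index property passes through intersection with $B$ on both sides.
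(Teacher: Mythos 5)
Your proof is correct. Note that the paper itself gives no proof of this statement --- it is quoted as Lemma 3.1 of the reference [CM2] --- so there is nothing to compare against; your argument (reducing $b(Q\cap B)b^{-1}\cap(Q\cap B)$ to $(bQb^{-1}\cap Q)\cap B$ via the observation that $b$ normalizes $B$, then pushing finite index through intersection with $B$ on both sides) is the standard direct verification and is complete.
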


\begin{lemma}\label{SUB} 

If $H$ is a subgroup of $A$ and $A$ is a subgroup of $G$ ($H<A<G$), then
$$H=Q_0\prec Q_1\cap A\prec \cdots \prec Q_{k}\cap A\prec A.$$
\end{lemma}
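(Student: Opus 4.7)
The plan is a straightforward induction along the chain, repeatedly invoking Lemma \ref{Intersect}. For each index $i\in\{0,1,\dots,k\}$ I want to establish $Q_i\cap A \prec Q_{i+1}\cap A$, with the convention that $Q_{k+1}=G$ so $Q_{k+1}\cap A = A$; stringing these together will yield the claimed chain.

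First note that since $Q_0 = H \subset A$, the intersection $Q_0 \cap A$ equals $Q_0 = H$, so the initial term of the new chain coincides with $H$ as asserted. For the intermediate steps, fix $i$ with $0\le i \le k-1$. The hypothesis gives $Q_i \prec Q_{i+1}$, so I apply Lemma \ref{Intersect} inside the ambient group $Q_{i+1}$, taking the subgroup $B := Q_{i+1}\cap A$ of $Q_{i+1}$. This yields
$$Q_i \cap (Q_{i+1}\cap A)\ \prec\ Q_{i+1}\cap A.$$
Because $Q_i \subset Q_{i+1}$, the left-hand side simplifies to $Q_i \cap A$, giving the desired commensuration $Q_i\cap A \prec Q_{i+1}\cap A$.

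For the final step, I apply Lemma \ref{Intersect} to $Q_k \prec G$ with $B := A$ to obtain directly $Q_k\cap A \prec A$. Concatenating these commensuration relations gives the chain
$$H = Q_0 \prec Q_1\cap A \prec \cdots \prec Q_k\cap A \prec A,$$
completing the proof. There is no real obstacle: the content is entirely absorbed into Lemma \ref{Intersect} together with the trivial set-theoretic identity $Q_i\cap(Q_{i+1}\cap A) = Q_i\cap A$ coming from $Q_i\subset Q_{i+1}$.
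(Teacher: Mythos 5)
Your proof is correct and follows essentially the same route as the paper: both arguments apply Lemma \ref{Intersect} at each level of the chain with ambient group $Q_{i+1}$ and subgroup $B=Q_{i+1}\cap A$, then use $Q_i\subset Q_{i+1}$ to simplify $Q_i\cap(Q_{i+1}\cap A)$ to $Q_i\cap A$. The only difference is cosmetic (a shift of index and your explicit handling of the endpoints $Q_0\cap A=H$ and $Q_{k+1}\cap A=A$, which the paper leaves implicit).
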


\begin{proof}
Recall $Q_{k+1}\coloneqq G$. For $i=1,\ldots, k+1$, define $B_i\coloneqq A\cap Q_i$. As $Q_{i-1}\prec Q_i$ and $B_i<Q_i$, Lemma \ref{Intersect} implies that $Q_{i-1}\cap B_i\prec B_i$. Equivalently, $Q_{i-1}\cap A \prec Q_i\cap A$. 
\end{proof}

\begin{lemma} \label{SUB2} 
Suppose $i\in \{1,2,\ldots, k+1\}$,   
$g\in Q_i $ and $Y$ is a subgroup of $Q_{i-1}$, then $g^{-1}Q_{i-1}g\cap Y$ has finite index in $Y$ and so $gYg^{-1}\cap Q_{i-1}$ 
has finite index in $gYg^{-1}$. Note that if $Y$ is finitely generated, then $g^{-1}Q_{i-1}g\cap Y$ and $Q_{i-1}\cap gYg^{-1}$ are as well.
\end{lemma}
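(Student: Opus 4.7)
The plan is to derive everything from the defining commensuration relation $Q_{i-1}\prec Q_i$, intersect with $Y$, and then conjugate.

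First, since $g\in Q_i$ and $Q_{i-1}\prec Q_i$, by the definition of commensurator (applied to $g^{-1}\in Q_i$) the subgroup $g^{-1}Q_{i-1}g\cap Q_{i-1}$ has finite index in $Q_{i-1}$. Next, because $Y\leq Q_{i-1}$, the standard fact that finite-index subgroups pull back to finite-index subgroups under restriction yields
$$[\,Y : (g^{-1}Q_{i-1}g\cap Q_{i-1})\cap Y\,]\;\leq\;[\,Q_{i-1} : g^{-1}Q_{i-1}g\cap Q_{i-1}\,]\;<\;\infty.$$
Since $Y\leq Q_{i-1}$, we have $(g^{-1}Q_{i-1}g\cap Q_{i-1})\cap Y=g^{-1}Q_{i-1}g\cap Y$, proving the first assertion.

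For the second assertion I would conjugate by $g$: the map $x\mapsto gxg^{-1}$ is a group isomorphism of $G$ onto itself carrying $Y$ onto $gYg^{-1}$ and carrying $g^{-1}Q_{i-1}g\cap Y$ onto $Q_{i-1}\cap gYg^{-1}$. Since conjugation preserves index, $Q_{i-1}\cap gYg^{-1}$ has finite index in $gYg^{-1}$.

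Finally, for finite generation, I would invoke Schreier's Lemma: if $Y$ is finitely generated, then any finite-index subgroup of $Y$ is finitely generated; in particular $g^{-1}Q_{i-1}g\cap Y$ is finitely generated. Its conjugate $Q_{i-1}\cap gYg^{-1}$ is isomorphic to it, hence also finitely generated. There is no substantial obstacle here — the lemma is simply a recording of how commensuration of $Q_{i-1}$ in $Q_i$ restricts to any subgroup $Y$ of $Q_{i-1}$ under conjugation by elements of $Q_i$, packaging the fact in the form needed for the induction steps in Sections 3 and 4.
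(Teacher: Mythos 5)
Your proposal is correct and follows essentially the same route as the paper: obtain $[Q_{i-1}:g^{-1}Q_{i-1}g\cap Q_{i-1}]<\infty$ from $Q_{i-1}\prec Q_i$, intersect with $Y\leq Q_{i-1}$ to get the first assertion, and conjugate by $g$ for the second. Your explicit index bound and the appeal to Schreier's Lemma for the finite-generation remark just make precise what the paper leaves implicit.
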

\begin{proof}
The group $g^{-1}Q_{i-1}g\cap Q_{i-1}$ has finite index in $Q_{i-1}$. So, the group $g^{-1}Q_{i-1}g\cap Q_{i-1}\cap Y=g^{-1}Q_{i-1}g\cap Y$ has finite index in $Y$. Conjugating, we have the group  $Q_{i-1}\cap gYg^{-1}$  has finite index in $gYg^{-1}$. 
\end{proof}

For $s\in\mathcal S^{\pm 1}$ let $\mathcal A_s$ be a finite generating set for $Q_{k}\cap s^{-1}Hs$ and define 
$$\mathcal A\coloneqq\mathcal H \cup _{s\in\mathcal S^{\pm 1}}\mathcal A_s.$$ 
Then $A\coloneqq \langle \mathcal A\rangle$ is a finitely generated subgroup of $Q_{k}$.  

The following two lemmas imply the semistability part of Theorem \ref{MainA}.

\begin{lemma} \label{FI}
If $H$ has finite index in $A$, then $H$ is commensurated in $G$ (and so $G$ is 1-ended and semistable at infinity by Theorem \ref{MainCM}). 
\end{lemma}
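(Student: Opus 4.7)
The plan is to verify that $H$ is commensurated in $G$ by checking each generator in $\mathcal{G}$, since $Comm(H,G)$ is a subgroup of $G$. Elements of $\mathcal{H}$ trivially commensurate $H$, so the task reduces to showing that for each $s \in \mathcal{S}^{\pm 1}$, the subgroup $sHs^{-1} \cap H$ has finite index in both $H$ and $sHs^{-1}$. Once this is established, $H \prec G$, and Theorem \ref{MainCM} immediately delivers the desired conclusion.

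The crux is to combine the finite index hypothesis $[A:H]<\infty$ with the commensuration $Q_k \prec G$. Fix $s \in \mathcal{S}^{\pm 1}$. By construction $\langle \mathcal{A}_s \rangle = Q_k \cap s^{-1}Hs \leq A$; intersecting this subgroup with $H$ inside $A$ gives
\[
[Q_k \cap s^{-1}Hs : H \cap s^{-1}Hs] \leq [A:H] < \infty,
\]
where we used $H \leq Q_k$ to identify $(Q_k \cap s^{-1}Hs) \cap H = H \cap s^{-1}Hs$. Separately, Lemma \ref{SUB2} applied with $i=k+1$, $g = s^{-1}$, $Y = H$ gives that $sQ_ks^{-1} \cap H$ has finite index in $H$; conjugating by $s^{-1}$ shows $Q_k \cap s^{-1}Hs$ has finite index in $s^{-1}Hs$. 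Chaining these two finite-index containments yields $[s^{-1}Hs : H \cap s^{-1}Hs] < \infty$, and conjugating by $s$ gives $[H : H \cap sHs^{-1}] < \infty$.

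Running the identical argument with $s$ replaced by $s^{-1}$ (which also lies in $\mathcal{S}^{\pm 1}$, with its own generating set $\mathcal{A}_{s^{-1}}$ already included in $\mathcal{A}$) yields $[H : H \cap s^{-1}Hs] < \infty$; conjugating by $s$ then produces $[sHs^{-1} : sHs^{-1} \cap H] < \infty$. This completes the commensuration check for every $s \in \mathcal{S}^{\pm 1}$, and hence for every element of the generating set $\mathcal{G}$.

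The main obstacle here is organizational rather than conceptual: one has to align the chain of finite-index containments so that each conjugation lands the correct pair of subgroups inside the correct ambient group. The role of $\mathcal{A}_s$ in the definition of $\mathcal{A}$ becomes transparent in the process — it is precisely what places $Q_k \cap s^{-1}Hs$ inside $A$, providing the bridge between the commensuration $Q_k \prec G$ (exploited via Lemma \ref{SUB2}) and the standing hypothesis $[A:H] < \infty$. Once that bridge is in place, the rest is routine bookkeeping, and Theorem \ref{MainCM} closes the argument.
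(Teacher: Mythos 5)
Your proposal is correct and follows essentially the same route as the paper: reduce to the generators $s\in\mathcal S^{\pm 1}$, use $[A:H]<\infty$ together with $Q_k\cap s^{-1}Hs\le A$ to get $[Q_k\cap s^{-1}Hs: H\cap s^{-1}Hs]<\infty$, invoke Lemma \ref{SUB2} with $Y=H$ to get $[s^{-1}Hs: Q_k\cap s^{-1}Hs]<\infty$, and then chain, conjugate, and symmetrize over $s\leftrightarrow s^{-1}$. The only cosmetic difference is that you make explicit the identification $(Q_k\cap s^{-1}Hs)\cap H=H\cap s^{-1}Hs$ and the fact that $Comm(H,G)$ is a subgroup, both of which the paper leaves implicit.
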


\begin{lemma}\label{IND} 
If $H$ has infinite index in $A$, then $H$ is subcommensurated in $A$:
$$H=Q_0\prec Q_1\cap A\prec \cdots \prec Q_{k}\cap A=A $$
and both $A$ and $G$ are 1-ended and semistable at infinity.
\end{lemma}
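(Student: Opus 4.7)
The plan is to derive the subcommensurated series for $H$ inside $A$ from Lemma \ref{SUB}, invoke the induction on the length of the series (whose base case is Theorem \ref{MainCM}) to obtain 1-endedness and semistability for $A$, and finally use the special structure of $\mathcal{A}$ to transfer these two properties up to $G$.

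First I would establish the series. Since $\mathcal{H}\subseteq\mathcal{A}$ we have $H\le A$, and since every $\mathcal{A}_s$ generates a subgroup of $Q_k$ and $\mathcal{H}\subseteq Q_k$ we have $A\le Q_k$. Applying Lemma \ref{SUB} to the chain $H<A<G$ gives
$$H=Q_0\prec Q_1\cap A\prec\cdots\prec Q_k\cap A\prec A,$$
and because $A\subseteq Q_k$ we have $Q_k\cap A=A$, collapsing the last two terms into the series of the statement, which has one fewer $\prec$-relation than the original series for $H$ in $G$. Inducting on this length with base case Theorem \ref{MainCM}, the triple $(A,H,\text{series of length }k)$---valid because $A$ is finitely generated by $\mathcal{A}$, $H$ is infinite and finitely generated, and $H$ has infinite index in $A$ by hypothesis---falls under the induction hypothesis, so $A$ is 1-ended and semistable at $\infty$.

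The main obstacle is extracting the same conclusion for $G$. The key input is Lemma \ref{SUB2} with $i=k+1$: for each $s\in\mathcal{S}^{\pm 1}$ the subgroup $Q_k\cap s^{-1}Hs$ has finite index in $s^{-1}Hs$ and is generated by $\mathcal{A}_s\subseteq\mathcal{A}$. Therefore
$$A\cap sAs^{-1}\;\supseteq\;s(Q_k\cap s^{-1}Hs)s^{-1}\;=\;H\cap sQ_ks^{-1},$$
which is a finitely generated subgroup of finite index in $H$, and in particular infinite. I would use this infinite overlap to transfer semistability from $A$ to $G$: working in the Cayley 2-complex of $G$ for a finite presentation $\langle\mathcal{G};R\rangle$, decompose edge-path rays coset-by-coset in $\{gA:g\in G\}$ and splice the proper homotopies guaranteed on each $A$-coset by the previous step across $\mathcal{S}$-labelled edges through the infinite translates of $H\cap sQ_ks^{-1}$, in the style of \cite{M4} and \cite{CM}. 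This yields conditions (2) and (4) of Theorem \ref{ssequiv} for $G$, delivering both 1-endedness and semistability. A direct appeal to Theorem \ref{MainCM} would be cleaner but is unavailable: $A$ is generally not commensurated in $G$, since once $H$ has infinite index in $A$ the intersection $A\cap sAs^{-1}$ is only infinite, not of finite index in $A$.
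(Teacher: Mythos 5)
Your proposal follows the paper's proof in all essentials: the series comes from Lemma \ref{SUB}, the $1$-endedness and semistability of $A$ come from the induction on the length of the subcommensurated series with base case Theorem \ref{MainCM}, and the transfer to $G$ is carried out exactly as you describe --- by splicing coset-wise proper homotopies across $\mathcal{S}$-labelled edges via the conjugation relations $a=s^{-1}a_s s$ for $a\in\mathcal{A}_s$, which realize precisely the overlap $s\langle\mathcal{A}_s\rangle s^{-1}=H\cap sQ_ks^{-1}\le A\cap sAs^{-1}$ that you identify, together with your correct observation that $A$ need not be commensurated in $G$. The only cosmetic difference is that the paper works in $\Gamma_{(G,\mathcal{A}\cup\mathcal{S})}(P\cup R)$ with just the finitely many relations it needs (per Definition \ref{defss3}, so that $G$ need only be finitely generated) rather than in the Cayley complex of a full finite presentation.
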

\begin{proof} (of Lemma \ref{FI})
It suffices to show that for $s\in \mathcal S^{\pm 1}$, $s^{-1}Hs\cap H$ has finite index in both $H$ and $s^{-1}Hs$. Since $H$ has finite index in $A$, and $\langle\mathcal A_s\rangle\coloneqq Q_{k}\cap s^{-1}Hs<A$, the group $H\cap (Q_{k}\cap s^{-1}Hs)\coloneqq H\cap s^{-1}Hs$ has finite index in $Q_{k}\cap s^{-1}Hs$. By Lemma \ref{SUB2} (with $Y=H$), the group $Q_{k} \cap s^{-1}Hs$ has finite index in $s^{-1}Hs$ and so $H\cap s^{-1}Hs$ has finite index in $s^{-1}Hs$ for all $s\in \mathcal S^{\pm 1}$. Conjugating we have $sHs^{-1}\cap H$ has finite index in $H$ for all $s\in \mathcal S^{\pm 1}$. Combining we have $s^{-1}Hs\cap H$ has finite index in both $H$ and $s^{-1}Hs$ for all $s\in \mathcal S^{\pm 1}$. 
\end{proof}

\begin{proof} (of Lemma \ref{IND})
Now suppose $H$ has infinite index in $A$. 
The subcommensurated sequence $H=Q_0\prec Q_1\prec \cdots \prec Q_{k}\prec G$ has length $k+1$. Theorem \ref{MainCM} shows that if $k=0$, then $G$ is 1-ended and semistable at $\infty$. Inductively, we assume that if $G'$ is finitely generated and there is a subcommensurated sequence $H'=Q_0'\prec Q_1'\prec \cdots \prec Q_{k-1}'\prec G'$ of length $k$ such that $H'$ is finitely generated and has infinite index in $G'$,  then $G'$ is 1-ended and  semistable at $\infty$. 

In our case, $H$ has infinite index in $A$, and the length $k$ subcommensurated series $H=Q_0\prec Q_1\cap A\prec \cdots \prec Q_{k-1}\cap A\prec A$ implies that $A$ is 1-ended and semistable at $\infty$. Hence we may choose a finite set $P$ of $\mathcal A$-relations so that 
$$\Gamma_{(A,\mathcal A)} (P)\hbox{ is 1-ended and semistable at }\infty.$$ 

If $s\in \mathcal S^{\pm 1}$ and $a\in \mathcal A_s$, then there is a $G$-relation of the form $a=s^{-1} a_ss$ for some $\mathcal H$-word $a_s$. Let $R$ be the (finite) collection of all such relations. 
Define
$$\tilde\Gamma 
\coloneqq \Gamma_{(G,\mathcal A\cup \mathcal S)}(P\cup R).$$ 

We simultaneous show $\tilde \Gamma$ is 1-ended and semistable at $\infty$ by showing all proper edge path rays in $\tilde \Gamma$ are properly homotopic (completing the proof of the semistability part of Theorem \ref{MainA}).

\begin{claim}\label{Cl3} 
Let $K$ be the length of the longest $R$-relation. If $v\in G$ (so $v$ is a vertex of $\tilde \Gamma$), $s\in \mathcal S^{\pm 1}$ and $r$ is a $\mathcal A_s$-proper ray at $v$, then $r$ is properly homotopic $rel \{v\}$ to a ray of the form $(s^{-1}, h_1',h_2',\ldots)$ where $h_i'\in \mathcal H$. Furthermore, this proper homotopy has image in  $St^K(im(r))$.
\end{claim}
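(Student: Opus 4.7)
The plan is to build the desired homotopy in two stages: first push each edge of $r$ across a single 2-cell coming from $R$, then cancel the resulting back-and-forth $s,s^{-1}$ pairs by spike-collapses. Write $r=(a_1,a_2,\ldots)$ with $a_i\in\mathcal A_s^{\pm 1}$, basepoint $u_0=v$, and vertices $u_i=u_{i-1}a_i$. For each $a\in\mathcal A_s$, the defining $R$-relation has the form $a=s^{-1}a_s s$ with $a_s$ an $\mathcal H$-word, so $a\,s^{-1}a_s^{-1}s$ (and all its cyclic inverses) is an $R$-relator of length at most $K$. This provides at each vertex of $\tilde\Gamma$ a 2-cell whose boundary realizes the four cyclic rearrangements we will need.

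The first stage is to homotope, for each $i$, the edge $a_i$ of $r$ across such a 2-cell based at $u_{i-1}$ to the edge path $(s^{-1},a_{s,i},s)$; when $a_i\in\mathcal A_s^{-1}$ the inverse relator does the same job. Each of these single-cell homotopies is supported in $St^K(u_{i-1})\subset St^K(im(r))$. The concatenation is an edge path from $v$ of the form $(s^{-1},a_{s,1},s,s^{-1},a_{s,2},s,s^{-1},a_{s,3},s,\ldots)$. At each junction $u_i$ the trailing $s$-edge of the $i$-th inserted segment ends at $u_i$, and the leading $s^{-1}$-edge of the $(i+1)$-st segment begins at $u_i$ and ends at $u_is^{-1}=u_{i-1}s^{-1}a_{s,i}$: these are the same geometric edge traversed in opposite directions, so a spike-collapse supported on that single edge cancels the pair. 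The resulting edge-path ray is $(s^{-1},h_1',h_2',\ldots)$, whose $\mathcal H$-letters are the concatenation $a_{s,1}a_{s,2}\cdots$ (with appropriate inversions for the indices where $a_i\in\mathcal A_s^{-1}$). The rel-$\{v\}$ condition is arranged by choosing the first 2-cell collapse to fix the corner at $v$, which is one of its boundary vertices.

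It remains to check that the composite homotopy is proper and has image in $St^K(im(r))$. Image containment is immediate from the stage-by-stage bounds above, since every supporting 2-cell and every spike-collapse edge meets $im(r)$. For properness, given any compact $C\subset\tilde\Gamma$, only finitely many $u_i$ lie in $St^K(C)$ because $r$ is proper; hence all but finitely many of the supporting 2-cells and spike-collapses miss $C$, and the preimage of $C$ under the homotopy is compact. The main obstacle is bookkeeping: carefully handling the case $a_i\in\mathcal A_s^{-1}$ and verifying that the adjacent spike cancellations at the junctions $u_i$ paste together into one continuous proper homotopy rel $\{v\}$. Once the 2-cells are laid out explicitly, there is no essential geometric difficulty.
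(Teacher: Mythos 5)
Your proposal is correct and follows essentially the same route as the paper: push each $\mathcal A_s$-edge across the 2-cell for the conjugation relation $a=s^{-1}a_s s$ to obtain $(s^{-1},(a_1)_s,s,s^{-1},(a_2)_s,s,\ldots)$, then cancel the adjacent $s,s^{-1}$ pairs, with everything supported in $St^K(im(r))$. You simply supply more of the bookkeeping (the $a_i\in\mathcal A_s^{-1}$ case and the properness check) that the paper leaves implicit.
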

\begin{proof}
Suppose $r=(a_1,a_2,\ldots)$ with $a_i\in \mathcal A_s$. Then, $r$ is properly homotopic $rel\{v\}$ to $(s^{-1},(a_1)_s,s, s,^{-1} (a_2)_s^{-1},s, \ldots )$ simply by using the 2-cells for the $R$-relation $a_i=s^{-1}(a_i)_ss$. Then $r$ is properly homotopic $rel \{v\}$ to $(s^{-1},(a_1)_s, (a_2)_s, \ldots )$ by a proper homotopy in $St^K(im(r))$.
\end{proof}

If $v\in G$  and $C_v$ is a compact subcomplex of $v\Gamma_{(A,\mathcal A)}(P)\subset \tilde \Gamma$ then there is a compact subcomplex $D_v$ of $ v\Gamma_{(A,\mathcal A)}(P)$ such that if $r$ and $s$ are edge path rays at $w\in v\Gamma_{(A,\mathcal A)}(P)-D_v$, then, $r$ and $s$ are properly homotopic $rel\{v\}$ by a proper homotopy in  $v\Gamma_{(A,\mathcal A)}-C_v$. 
Hence, if $C$ is a compact subcomplex of $\tilde\Gamma$ and we let $C_v=C\cap v\Gamma_{(A,\mathcal A)}(P)$ (for the finite set of vertices  $v$ such $C\cap v\Gamma_{(A,\mathcal A)}(P)\ne\emptyset$) and let $D=\cup D_v$, then any two $\mathcal A$-rays $r$ and $s$ at $w\in v\Gamma_{(A,\mathcal A)}(P)-D$ are properly homotopic $rel\{w\}$ in $\tilde\Gamma-C$.

By Lemma 2 of \cite{M2} (an elementary graph theory result), for each $v\in G$, there are $\mathcal H$-rays $r_v$ at $v$ such that for any compact set $C\subset \tilde \Gamma$, only finitely many $v$ are such that $r_v$ intersects $C$. Also, for each $s\in \mathcal S^{\pm 1}$, there is an $\mathcal A_s$-ray $r_{(s,v)}$  at $v$ such that for any compact set $C\subset \tilde \Gamma$, only finitely many $v$ are such that $r_{(s,v)}$ intersects $C$.

Choose a sequence of compact subcomplexes $\{C_i\}_{i=1}^\infty$ of $\tilde \Gamma$ satisfying the following conditions:
\begin{enumerate}
\item $\bigcup_{i=1}^\infty C_i= \tilde{\Gamma}$ 
\item $St^K(C_i)$ (see Claim \ref{Cl3}) is contained in the interior of $C_{i+1}$, and the finite set of vertices $v$ such that $r_v$ or $r_{(s,v)}$  ($s\in \mathcal S^{\pm 1}$)  intersects $C_i$, is a subset of $C_{i+1}$.  
\item If $r$ and $s$ are $\mathcal{A}$-rays in $\tilde \Gamma-C_{i}$ both based at a vertex $v$, then $r$ and $s$ are properly homotopic $rel\{v\}$ by a proper homotopy in $v\Gamma_{(A,\mathcal A)}(P)-C_{i-1}$. (See Theorem \ref{ssequiv} part (4).)
\end{enumerate}

For convenience define $C_i=\emptyset$ for $i<1$ and observe that conditions $(1)$, $(2)$, and $(3)$ (see part (5) of Theorem \ref{ssequiv}) remain valid for all $C_i$. 
The next lemma implies  Lemma \ref{IND} and concludes the proof of the semistability part of Theorem \ref{MainA}. 

\begin{lemma}\label{4.3} If $v$ is a vertex of $\tilde{\Gamma}$, and $t = (e_1, e_2,\ldots )$ is an arbitrary ray at $v$, then $t$ is properly homotopic to $r_v$, $rel\{v\}$.

\end{lemma}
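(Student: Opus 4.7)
The plan is a staircase argument. Write $t=(e_1,e_2,\ldots)$ and set $u_n = t(n)$, so $u_0=v$. For each $n\ge 0$ define the ray at $v$
$$\rho_n \;=\; (e_1,\,e_2,\,\ldots,\,e_n)\cdot r_{u_n},$$
so $\rho_0 = r_v$, each $\rho_n$ agrees with $t$ on its first $n$ edges, and the $\rho_n$ converge pointwise to $t$. I would build the proper homotopy rel $\{v\}$ from $r_v$ to $t$ by packing proper homotopies $\rho_n\simeq\rho_{n+1}$ rel $\{v\}$ into a sequence of strips accumulating at $\{1\}\times[0,\infty)$; each individual step may be chosen to fix the initial edge path $(e_1,\ldots,e_n)$ and modify only the tail starting at $u_n$.

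The tail homotopy splits into two cases according to the label of $e_{n+1}$. If $e_{n+1}$ is an $\mathcal{A}$-edge, then $r_{u_n}$ and $(e_{n+1}, r_{u_{n+1}})$ are both $\mathcal{A}$-rays at $u_n$ lying in $u_n\Gamma_{(A,\mathcal{A})}(P)$, and condition (3) on the $C_i$'s furnishes a proper homotopy rel $\{u_n\}$ inside this translated coset complex. If $e_{n+1} = s\in\mathcal{S}^{\pm 1}$, introduce the auxiliary $\mathcal{A}_{s^{-1}}$-ray $r_{(s^{-1},u_n)}$ at $u_n$ and apply three homotopies in succession: (i) condition (3) gives $r_{u_n}\simeq r_{(s^{-1},u_n)}$ rel $\{u_n\}$ inside $u_n\Gamma_{(A,\mathcal{A})}(P)$; (ii) Claim \ref{Cl3}, applied with $s$ replaced by $s^{-1}$, gives $r_{(s^{-1},u_n)}\simeq (s,r'')$ rel $\{u_n\}$ by a homotopy whose image lies in $St^K(\mathrm{im}(r_{(s^{-1},u_n)}))$, where $r''$ is some $\mathcal{H}$-ray at $u_{n+1}$; (iii) condition (3) again gives $r''\simeq r_{u_{n+1}}$ rel $\{u_{n+1}\}$ inside $u_{n+1}\Gamma_{(A,\mathcal{A})}(P)$, so $(s,r'')\simeq (s,r_{u_{n+1}}) = (e_{n+1},r_{u_{n+1}})$ rel $\{u_n\}$. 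Concatenation yields $\rho_n\simeq\rho_{n+1}$ rel $\{v\}$.

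The main obstacle will be assembling these individual homotopies into a globally proper one, and this is where the bookkeeping built into the $C_i$'s pays off. For any fixed $i$, I would show that whenever $u_n,u_{n+1}\notin C_{i+2}$, the entire composite $\rho_n\to\rho_{n+1}$ homotopy misses $C_i$: by condition (2), each base vertex outside $C_{j+1}$ guarantees $r_v$ and every $r_{(s,v)}$ avoid $C_j$, so the three rays $r_{u_n}$, $r_{(s^{-1},u_n)}$, $r_{u_{n+1}}$ avoid $C_{i+1}$; the two semistability homotopies then live in the relevant coset complex minus $C_i$, and the Claim \ref{Cl3} homotopy lies in $St^K(r_{(s^{-1},u_n)})$, which is disjoint from $C_i$ since $St^K(C_i)\subset C_{i+1}$. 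Properness of $t$ ensures $u_n\notin C_{i+2}$ for all $n$ beyond some $N_i$, so only finitely many strips can meet any prescribed compact set. Parametrizing the $n$-th step in a strip $[1-2^{-n},\,1-2^{-(n+1)}]\times[0,\infty)$ and combining with the individual properness of each step produces a proper homotopy $H:[0,1]\times[0,\infty)\to\tilde{\Gamma}$ with $H(0,\cdot)=r_v$, $H(1,\cdot)=t$, and $H(\cdot,0)\equiv v$, completing the lemma.
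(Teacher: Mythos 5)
Your proposal is correct and follows essentially the same route as the paper's proof: the same staircase through the rays $r_{u_n}$, the same two cases according to the label of the new edge, the same three-step homotopy in the $\mathcal S$-edge case (condition (3), then Claim \ref{Cl3}, then condition (3) again), and the same properness argument via the filtration $\{C_i\}$ together with the properness of $t$. The only discrepancy is a harmless off-by-one in the constants (your composite step avoids $C_{i-1}$ rather than $C_i$, since the last semistability homotopy only avoids one filtration level below its two rays), which does not affect the ``only finitely many strips meet a given compact set'' conclusion.
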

\begin{proof}
Assume that $t$ has consecutive vertices $v=v_0, v_1,\ldots $.  By construction, if $v_j \in C_i - C_{i-1}$, then $r_{v_j}$ avoids $C_{i-1}$. Assume $j$ is the largest integer such that $C_j$ avoids $e_i$. Then $r_{v_{i-1}}$ and $r_{v_i}$ avoid $C_{j-1}$. We will show $r_{v_{i-1}}$ is properly homotopic to $e_i*r_{v_i}$ 
$rel\{v_{i-1}\}$ by a proper homotopy $H_i$ with image avoiding $C_{j-3}$. 

If $e_i \in \mathcal{A} ^{\pm 1}$, this is clear by condition $(3)$ with $H_i$ avoiding $C_{j-2}$. 
If $e_i \in {\mathcal S}^{\pm 1}$, then $r_{v_{i-1}}$ and $r_{(e_i,v_{i-1})}$ are $\mathcal A$-rays avoiding $C_{j-1}$ and so by $(3)$ are properly homotopic $rel\{v_{i-1}\}$ by a homotopy avoiding $C_{j-2}$. 
By Claim \ref{Cl3} and condition $(2)$, $r_{(e_i,v_{i-1})}$ is properly homotopic $rel\{v_{i-1}\}$ to a ray $(e_i, h_1',h_2',\ldots )$ where $h_i'\in \mathcal H^{\pm 1}$ and the homotopy avoids $C_{j-2}$. By condition $(3)$, $(h_1',h_2',\ldots )$ is properly homotopic $rel \{v_i\}$ to $r_{v_i}$ by a proper homotopy that avoids $C_{j-3}$. Patch these three proper homotopies together to obtain $H_i$. (See Figure 1.)

\vspace {.5in}
\vbox to 2in{\vspace {-2in} \hspace {-2in}
\includegraphics[scale=1]{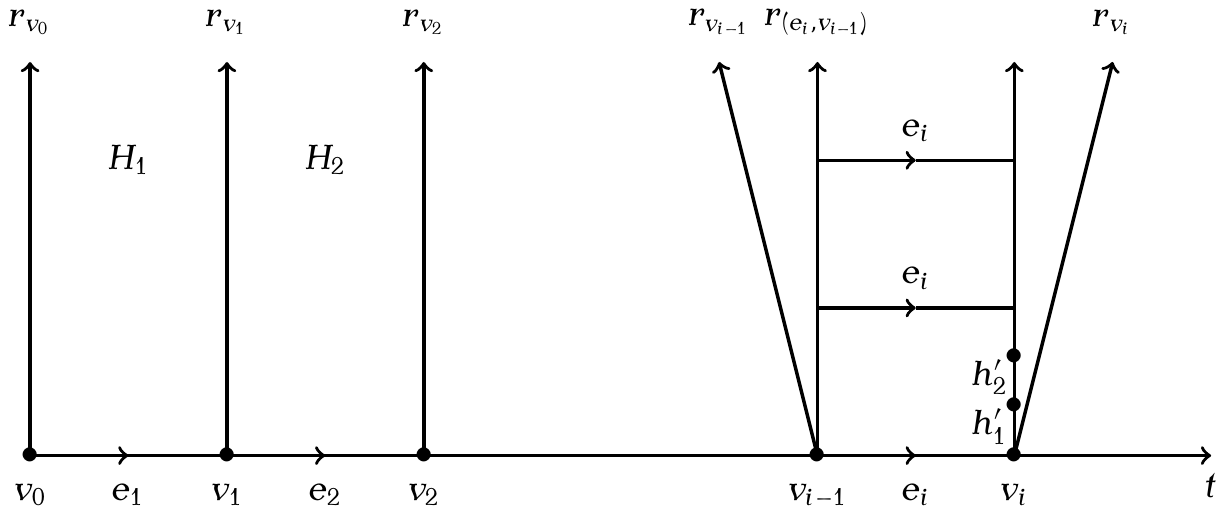}
\vss }

\vspace{-.25in}

\centerline{Figure 1}
\vspace {.25in}

Let $H$ be the homotopy  $rel\{v\}$ of $t$ to $r_v$, obtained by patching together the homotopies $H_i$. 
We need to check that $H$ is proper. Let $C \subset \tilde \Gamma$ be compact. Choose an index $j$ such that $C \subseteq C_j$. Since $t$ is a proper edge path to infinity, choose an index $N$ such that all edges after the $N^{th}$-edge of $t$ avoid $C_{j+3}$. Then for all $i>N$, $H_i$ avoids $C_j$, so $H$ is proper.
\end{proof}
This concludes the proof of Lemma \ref{IND} and the first part of Theorem \ref{MainA}. 
\end{proof}
\section{Simple Connectivity at $\infty$}

It is straightforward to check that the proof of the simply connectivity at $\infty$ part of Theorem \ref{MainCM} given in \cite{CM} extends to the finitely generated case (as follows): If $\langle \mathcal S;R\rangle$ is a finite presentation of the group $G$ then $\Gamma_{(G,\mathcal S)}(R)$ is simply connected. The only time the simple connectivity of $\Gamma$ is used in the proof of Theorem \ref{MainCM} is via the fact: 

$(\ast)$ If $C$ is a compact subset of $\Gamma$ and $N$ is a fixed positive integer, then there is an integer $M(N,C)$ such that any edge path loop $\alpha$ of length $\leq N$ in $\Gamma-St^M(C)$ is homotopically trivial  in $\Gamma-C$. 

Suppose $G$ is a finitely generated subgroup of a finitely presented group $W$ and $W$ has presentation $\langle \mathcal W;R\rangle$ where $\mathcal W$ contains a set of generators $\mathcal G$ for $G$. When proving a finitely generated version of the simply connected at infinity part of Theorem \ref{MainCM}, all work is done in the simply connected space $\Gamma_{(W,\mathcal W)}(R)$, and  one only needs $(\ast)$ for edge path loops $\alpha$ with edge labels in $\mathcal G^{\pm 1}$. Hence, the proof of the simply connected at $\infty$ part of Theorem \ref{MainCM} directly extends to the stronger finitely generated version: 

\begin{theorem} (G. Conner, M. Mihalik Improved)\label{MainFG} 
Suppose $H$ is a 1-ended, finitely presented infinite subgroup of infinite index in the finitely generated group $G$, and  $H$ is commensurated in $G$.
Then $G$ is 1-ended and simply connected at $\infty$. 
\end{theorem}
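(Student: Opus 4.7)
The plan is to invoke Definition \ref{SCfg}: fix any finitely presented over-group $W$ of $G$ with finite presentation $\langle\mathcal W;R\rangle$ such that $\mathcal W\supseteq\mathcal G$ for some finite generating set $\mathcal G$ of $G$ (such a presentation exists by extending $\mathcal G$ to a generating set of $W$ and rewriting the relations of any chosen finite presentation of $W$ accordingly), and aim to verify Definition \ref{SCin} inside the 2-complex $\Gamma\coloneqq\Gamma_{(W,\mathcal W)}(R)$: for every compact $C\subset\Gamma$ exhibit a compact $D\subset\Gamma$ such that every edge path loop of $\Gamma_{(G,\mathcal G)}-D$ is nullhomotopic in $\Gamma-C$. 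Since $W$ is arbitrary, this will give Definition \ref{SCfg}; one-endedness of $G$ already follows from Theorem \ref{MainCM}.

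Given this setup I would run the proof of the simply-connected-at-$\infty$ clause of Theorem \ref{MainCM} from \cite{CM} verbatim, building every loop and homotopy inside the simply connected space $\Gamma$ rather than inside a Cayley 2-complex of $G$ (which is not available because $G$ need not be finitely presented). A reading of that proof shows that the simple connectivity of the ambient complex enters only through the property $(\ast)$ stated at the start of this section, and that at every invocation the loop to be nullhomotoped is $\mathcal G$-labelled---its letters come from edges of the Cayley graph of $G$ and from translates of $H$-tracks inside cosets of $H$, all of which are indexed by $\mathcal G$---and has length bounded by a constant depending only on $C$. Consequently the whole adaptation reduces to verifying $(\ast)$ inside $\Gamma$ for $\mathcal G$-labelled loops of length at most a fixed $N$.

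The verification of this restricted $(\ast)$ is routine. The complex $\Gamma$ is simply connected, locally finite, and carries a cocompact $W$-action, so up to left $W$-translation only finitely many edge path loops of length $\leq N$ exist. For each such loop pick a nullhomotopy and let $M_0=M_0(N)$ be an integer bounding the diameters of all these finitely many nullhomotopies. Any $M$ large enough that the $(M_0+L+1)$-star of $\Gamma-St^M(C)$ is disjoint from $C$ (with $L$ the longest relator length in $R$) then supplies the compact $St^M(C)$ required by $(\ast)$. The main obstacle in the plan is therefore the bookkeeping step of auditing the CM proof paragraph by paragraph to certify that every appeal to simple connectivity really does pass through $(\ast)$ applied to a $\mathcal G$-labelled loop of controlled length; once that audit is accepted, no new ideas are needed, which is precisely why the author describes the extension as straightforward.
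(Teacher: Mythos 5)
Your proposal matches the paper's argument essentially verbatim: the paper likewise embeds $G$ in a finitely presented $W$, runs the proof of the simple connectivity clause of Theorem \ref{MainCM} inside the simply connected complex $\Gamma_{(W,\mathcal W)}(R)$, and observes that simple connectivity of the ambient complex is invoked only through property $(\ast)$ applied to $\mathcal G$-labelled loops of bounded length. Your explicit cocompactness verification of $(\ast)$ is the standard one the paper leaves implicit, so the two arguments coincide.
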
  

In order to finish the proof of our main theorem it remains to prove:
 
\begin{theorem} \label{SubC} 
Suppose that $H$ is a 1-ended, finitely presented, subcommensurated subgroup of infinite index in the finitely generated group $G$:
$$H=Q_0\prec Q_1\prec \cdots \prec Q_{k}\prec Q_{k+1}= G$$
Then $G$ is simply connected at infinity.
\end{theorem}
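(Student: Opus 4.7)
The plan is to induct on $k$, the length of the subcommensurated chain, in close parallel to the semistability argument of Section 3. The base case $k = 0$ is the strengthened Conner-Mihalik result (Theorem \ref{MainFG}). For the inductive step, construct the finitely generated subgroup $A = \langle \mathcal A \rangle$ with $\mathcal A = \mathcal H \cup \bigcup_{s \in \mathcal S^{\pm 1}} \mathcal A_s$ exactly as in Section 3, and split into cases. If $H$ has finite index in $A$, then Lemma \ref{FI} supplies $H \prec G$, and since $H$ is 1-ended and finitely presented, Theorem \ref{MainFG} gives the conclusion. Otherwise, Lemma \ref{SUB} yields a subcommensurated chain $H = Q_0 \prec Q_1 \cap A \prec \cdots \prec Q_{k-1} \cap A \prec A$ of length $k$ with $H$ of infinite index in $A$. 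As $A$ is a finitely generated subgroup of the finitely presented $G$, it is recursively presented, so the inductive hypothesis and Definition \ref{SCfg} give that $A$ is simply connected at $\infty$ in $G$ in the sense of Definition \ref{SCin}.

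It then remains to transfer simple connectivity at $\infty$ from $A$ to $G$. I would work in the 2-complex $\tilde \Gamma = \Gamma_{(G, \mathcal A \cup \mathcal S)}(P \cup R)$, where $R$ is the set of coset relations $a = s^{-1} a_s s$ from Section 3 and $P$ is a finite set of $\mathcal A$-relations witnessing the simple connectivity at $\infty$ of $A$ in $G$ for this presentation. Reusing the canonical $\mathcal H$-rays $r_v = v \cdot r$ (for a fixed $\mathcal H$-ray $r$ at the identity) and the proper homotopies $H_i$ from the proof of Lemma \ref{4.3}, any loop $\alpha$ at a vertex $v$ deep in $\tilde \Gamma$ admits a proper strip homotopy (with image avoiding a pre-assigned compact $C$) from $r_v$ to $\alpha \cdot r_v$. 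Because both boundary rays of this strip lie in the coset $vH \subset vA$, capping it to yield an actual null-homotopy of $\alpha$ reduces to filling, by a disc avoiding $C$, a loop whose vertices are uniformly close to $vA$. The simple connectivity of $A$ in $G$, via Lemma \ref{equiv}, is the intended tool.

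The main obstacle is the non-uniformity flagged in the Remark after Lemma \ref{equiv}: the compact $D(C, N)$ furnished by that lemma fills loops near $A$ itself, or near any prescribed finite list of cosets, but not uniformly for loops near arbitrary translates $vA$ as $v$ ranges over $G$. I expect to navigate this by combining the finiteness of the vertex set of $\alpha$ (which visits only finitely many cosets) with the $G$-equivariance of $\tilde \Gamma$ and of the constructions of $r_v$ and $H_i$: a $v^{-1}$-translate of the capping problem produces a loop near $A$ proper, to which Lemma \ref{equiv} applies, and the resulting filling pulls back by $v$. The subtlety is that $C$ is then replaced by the translate $v^{-1}C$, which varies with the basepoint $v$ of $\alpha$; absorbing this variation should require an exhaustion $\{C_i\}$ of $\tilde \Gamma$ chosen so that the compact $D$ forced by $C$ swallows every translated compact actually invoked, in the spirit of (but more delicate than) the exhaustion used in Lemma \ref{4.3}. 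The bookkeeping of this interleaving---ensuring that the patched null-homotopy provably avoids $C$ rather than merely being proper---is the technical heart of the argument, and the flexibility of Definition \ref{SCfg} across finite presentations of the ambient group is what makes the induction close.
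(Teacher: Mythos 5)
Your setup (induction on $k$, base case Theorem \ref{MainFG}, the subgroup $A$, the dichotomy via Lemma \ref{FI}, and the strip homotopy from $r_v$ to $\alpha\cdot r_v$ built from the semistability machinery of Section 3) matches the paper. But the step you yourself flag as the technical heart is a genuine gap, and the fix you sketch does not work. Translating the capping problem by $v^{-1}$ converts ``fill a loop near $vA$ avoiding $C$'' into ``fill a loop near $A$ avoiding $v^{-1}C$''; since the translates $v^{-1}C$ are unbounded as $v$ ranges over the (infinitely many) basepoints that must be handled for a single compact $C$, the compacts $D(v^{-1}C,N)$ supplied by Lemma \ref{equiv} have no common bound, and no choice of exhaustion $\{C_i\}$ recovers a single $D$ that works for all of them. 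This is exactly the failure mode the Remark after Lemma \ref{equiv} warns about: equivariance of $\tilde\Gamma$ does not make the filling constant $D(C,N)$ equivariant.

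The paper's resolution requires two ingredients absent from your proposal. First, for each $g\in G$ it introduces the finitely generated subgroups $B_g=gAg^{-1}\cap Q_k$ and $B_\ell=\langle\mathcal A\cup\bigcup_{|g|\le \ell}\mathcal B_g\rangle<Q_k$, and proves (Lemmas \ref{close} and \ref{Ai2}) that every vertex of a coset $gA$ with $|g|\le\ell$ lies within the \emph{uniform} distance $\ell+N_\ell$ of the \emph{single subgroup} $B_\ell$. After normalizing the basepoint of $\alpha$ to lie in $H$ (so that the cosets $v_i'A$ met by the strip have $|v_i'|\le\ell=$ length of $\alpha$), the entire strip homotopy lies within $\ell+N_\ell$ of $B_\ell$. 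Since $H$ is $k$-subcommensurated in $B_\ell$ by Lemma \ref{SUB}, the induction hypothesis makes $B_\ell$ itself simply connected at $\infty$, and Lemma \ref{equiv} is then applied \emph{once}, to $B_\ell$ with $N=\ell+N_\ell$, eliminating the quantification over translates entirely. (The dependence of $E$ on $\ell$ is harmless because $D$ is chosen first, independently of $\alpha$.) Second, the argument must be carried out in $\Gamma_{(W,\mathcal W)}(R)$ for an arbitrary finitely presented overgroup $W\supset G$, as Definition \ref{SCfg} demands; working only in $\tilde\Gamma=\Gamma_{(G,\mathcal A\cup\mathcal S)}(P\cup R)$, which is not simply connected and is not the ambient complex in which ``simply connected at $\infty$'' for the finitely generated $G$ is defined, does not establish the conclusion. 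Without the $B_\ell$ construction your argument cannot close.
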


\noindent {\bf Proof:} We say $H$ is $(k+1)$-subcommensurated in $G$. When $k=0$,  Theorem \ref{MainFG} implies that $G$ is simply connected at $\infty$. 
Assume (inductively) the statement of Theorem \ref{SubC} is valid when $H$ is $(n+1)$-subcommensurated for $n<k$. Let $$\mathcal H\coloneqq \{h_1,\ldots , h_n\}\hbox{ generate }H$$  
$$\mathcal G\coloneqq \{h_1,\ldots ,h_n,s_1,\ldots ,s_m\}\hbox{ generate }G \hbox{ and }$$   
$$\hbox{ let }\mathcal S=\{s_1,\ldots ,s_m\}.$$ 


For $p$ an element of a group $P$ with generating set $\mathcal P$, let $|g|_{\mathcal P}$ be the smallest integer $\ell$ such that $g$ is a product of $\ell$ elements in $\mathcal P^{\pm 1}$. We use the following notation : 
$|g|\coloneqq |g|_{\mathcal G}$ for all $g\in G$.

For each $s\in \mathcal S^{\pm}$ let $\mathcal A_s$ be a finite generating set for $sHs^{-1}\cap Q_{k}$ (see Lemma \ref{SUB2} with $Y=H$) and let $\mathcal A_s'\coloneqq s^{-1}\mathcal A_s s\subset H$.  
Choose an integer $L_1$ such that 
$$L_1\geq |a|_{\mathcal H}\hbox{ for all }a\in (\cup _{s\in \mathcal S^{\pm 1}}\mathcal A'_s).$$ 
We have: 
$$A_s\coloneqq \langle \mathcal A_s\rangle<Q_k\hbox{ has finite index in }sHs^{-1} \hbox{ and }$$ $$A'_s\coloneqq \langle \mathcal A'_s\rangle =s^{-1}A_ss<H \hbox{ has finite index in }H.$$ 

As in $\S 3$ define $$\mathcal A_1\coloneqq \cup _{s\in \mathcal S^{\pm 1}} \mathcal A_s,\ \mathcal A\coloneqq\mathcal H\cup \mathcal A_1\hbox{ and }A\coloneqq\langle \mathcal  A\rangle <Q_k.$$

For each $s\in \mathcal H^{\pm 1}$ and $a\in \mathcal A_s$ there is an $\mathcal H$-word $w(s,a)$ of length $\leq L_1$, such that $s^{-1}asw^{-1}(a,s)$ is a $(A,\mathcal A)$-relator, which we denote by $r(a,s)$. Let
$$R_1=\{r(a,s):s\in \mathcal H^{\pm 1}, a\in \mathcal A_s\}.$$ 
For each $g\in G$ let $\mathcal B_g$ be a finite generating set for the group $gAg^{-1}\cap Q_{k}$. 
(See Lemma \ref{SUB2} with $Y=A$.) 
Let $\mathcal B_g'\coloneqq g^{-1}\mathcal B_g g\subset A\cap g^{-1}Q_kg.$ 
If $g\in A$ then $gAg^{-1}=A$ and so we define $\mathcal B_g\coloneqq \mathcal A\coloneqq \mathcal B_g'$.

 Then 
$$B_g\coloneqq \langle \mathcal B_g\rangle=gAg^{-1}\cap Q_k\hbox{ has finite index in }gAg^{-1} \hbox{ and }$$ $$B'_g\coloneqq \langle \mathcal B'_g\rangle =g^{-1}B_gg=A\cap g^{-1}Q_kg \hbox{ has finite index in }A.$$ 

For each $g\in G$, let $N_g$ be an integer so that in the Cayley graph $\Gamma _{(G,\mathcal G)}$, each vertex of $A$ is within $N_g$ of a vertex of $B'_g$.
Let $$\mathcal B_j=\mathcal A \cup (\cup _{\{g\in G:|g|\leq j\}}  \mathcal B_g)\subset Q_k\hbox{ and }N_j\coloneqq max\{N_g:g\in G\hbox{ and }|g|\leq j\}.$$ 

\begin{lemma}\label{close} 
Suppose $g\in G$ and $y\in gA$. Then in $\Gamma_{(G,\mathcal G)}$, $y$ is within $N_g+|g|$ of a point of $B_g$. 
\end{lemma}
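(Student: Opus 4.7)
\textbf{Proof plan for Lemma \ref{close}.} The plan is to exploit two standard metric features of the Cayley graph $\Gamma_{(G,\mathcal G)}$: left-multiplication by any element of $G$ is an isometry, and right-multiplication by $g$ moves any vertex a distance at most $|g|$ (by following an edge path labeled by a geodesic word for $g$). Write $y = ga$ with $a \in A$. The candidate point of $B_g$ will be produced from the definition of $N_g$ applied to $a \in A$.

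First I would unwind the defining relation $B_g' = g^{-1} B_g g$: each element $b' \in B_g'$ has the form $b' = g^{-1} b g$ for a unique $b \in B_g$, equivalently $g b' = b g$. By the choice of $N_g$, there is a vertex $b' \in B_g'$ with $d(a, b') \leq N_g$ in $\Gamma_{(G,\mathcal G)}$. Let $b \in B_g$ be the corresponding element, so that $bg = gb'$ as vertices of $\Gamma_{(G,\mathcal G)}$.

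Now I would estimate $d(y, b)$ by inserting the intermediate vertex $bg = gb'$ and applying the triangle inequality:
\[
d(ga, b) \ \leq\ d(ga,\, gb') \ +\ d(gb',\, b) \ =\ d(ga,\, gb') \ +\ d(bg,\, b).
\]
The first summand equals $d(a, b') \leq N_g$ because left-multiplication by $g$ is an isometry of $\Gamma_{(G,\mathcal G)}$. The second summand is at most $|g|$, since if $g = s_{i_1}\cdots s_{i_{|g|}}$ is a geodesic expression in $\mathcal G^{\pm 1}$, then the edge path $b, bs_{i_1}, bs_{i_1}s_{i_2}, \dots, bg$ has length $|g|$. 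Combining gives $d(y, b) \leq N_g + |g|$ with $b \in B_g$, as required.

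There is no real obstacle here; the content is purely bookkeeping about the two sides of the conjugation $B_g' = g^{-1} B_g g$ and the bi-equivariance properties of the Cayley graph metric. The only point to be careful about is not to confuse left- and right-multiplication: the distance bound between $a$ and $b'$ (an "internal" statement inside $A$) is transported to a distance bound between $ga$ and $gb' = bg$ using left-invariance, and then one pays an extra $|g|$ to slide from $bg$ back to $b$.
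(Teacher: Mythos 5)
Your proof is correct and follows essentially the same route as the paper's: both arguments pick $b'\in B_g'$ within $N_g$ of $a$, transport this by left-multiplication to get $gb'$ within $N_g$ of $y=ga$, and then observe that $gb'=(gb'g^{-1})g$ lies within $|g|$ of the element $gb'g^{-1}\in B_g$. The only cosmetic difference is that you phrase the last step via the identity $gb'=bg$ and an explicit triangle inequality, while the paper writes the same estimate in words.
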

\begin{proof} 
Let $y=ga$ for some $a\in A$. There is $b'\in B_g'\coloneqq g^{-1}Q_kg\cap A$ within $N_g$ of $a$. Then $y'\coloneqq gb'$ is within $N_g$ of $y=ga$. As $y'g^{-1}=gb'g^{-1} \in Q_k\cap gAg^{-1}= B_g$, $y'$ is within $|g|$ of $B_g$ and so $y$ is within $N_g+|g|$ of $B_g$.
\end{proof}

If $H$ has finite index in $A$, then by Lemma \ref{FI}, $H$ is commensurated in $G$ and so $G$ is simply connected at infinity by Theorem \ref{MainFG}. So, we may assume that $H$ has infinite index in $A$. Our induction hypothesis, Lemma \ref{SUB} and the results of $\S 3$ imply:

\begin{lemma} \label{SC} 
The finitely generated subgroups $A$ and $B_j$ of $Q_{k}$, are 1-ended, semistable at $\infty$ and simply connected at $\infty$ for all $j\geq 1$.
\end{lemma}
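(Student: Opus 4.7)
The plan is to apply the inductive hypothesis of Theorem \ref{SubC} (valid for chains with $n+1 \leq k$ commensuration relations) together with Lemma \ref{SUB} and the semistability half of Theorem \ref{MainA} already established in $\S 3$. The key observation driving both cases of Lemma \ref{SC} is that $A$ and $B_j$ both sit \emph{inside} $Q_k$, which lets us truncate the intersected chain produced by Lemma \ref{SUB} and drop its length by one.

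I would dispatch $A$ first. Since $H < A < Q_k$, Lemma \ref{SUB} (applied with $Q_k$ as the ambient group in place of $G$) yields
$$H = Q_0 \prec Q_1 \cap A \prec \cdots \prec Q_{k-1} \cap A \prec Q_k \cap A = A,$$
and the final equality $Q_k \cap A = A$ (using $A < Q_k$) collapses the chain to exactly $k$ commensuration relations. We already assumed $H$ has infinite index in $A$, and $H$ is an infinite finitely generated subgroup, so the semistability part of Theorem \ref{MainA} proved in $\S 3$ delivers that $A$ is 1-ended and semistable at $\infty$. Since $H$ is additionally 1-ended and finitely presented, and the truncated chain has only $k$ commensurations, the inductive hypothesis of Theorem \ref{SubC} (applied with $n = k-1$) gives that $A$ is simply connected at $\infty$.

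For $B_j$, the approach is identical, once one verifies the standing hypotheses. By construction $A \subseteq B_j \subseteq Q_k$, and $B_j$ is finitely generated because $\mathcal{B}_j = \mathcal{A} \cup \bigcup_{|g|\leq j} \mathcal{B}_g$ is a finite union of finite sets (finitely many elements $g \in G$ satisfy $|g| \leq j$, and each $\mathcal{B}_g$ is finite). Applying Lemma \ref{SUB} with ambient group $Q_k$ and intermediate subgroup $B_j$ produces
$$H = Q_0 \prec Q_1 \cap B_j \prec \cdots \prec Q_{k-1} \cap B_j \prec Q_k \cap B_j = B_j,$$
again a chain of only $k$ commensurations. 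Since $[A:H] = \infty$ and $A \subseteq B_j$, we also have $[B_j:H] = \infty$. The conclusion then follows exactly as for $A$: Theorem \ref{MainA}'s $\S 3$ half gives 1-endedness and semistability, and the inductive hypothesis of Theorem \ref{SubC} supplies simple connectivity at $\infty$.

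I do not anticipate a substantive obstacle; the argument is essentially bookkeeping. The one point that deserves care is the implicit truncation of the chain when the intermediate subgroup already lies in $Q_k$, since this is precisely what reduces the length by one and lets the inductive hypothesis engage at a strictly smaller depth than the one currently being proved.
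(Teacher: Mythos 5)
Your proposal is correct and matches the paper's argument: the paper simply asserts that Lemma \ref{SC} follows from the inductive hypothesis of Theorem \ref{SubC}, Lemma \ref{SUB}, and the results of $\S 3$, which is exactly the truncation-of-the-chain bookkeeping you carry out (using that $A, B_j < Q_k$ collapses the intersected series to length $k$, that $[A:H]=\infty$ may be assumed by the reduction via Lemma \ref{FI}, and hence $[B_j:H]=\infty$). No substantive difference from the paper's route.
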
 

Next assume that $G$ is a subgroup of a finitely presented (over) group $W$. Then  for all $j\geq 1$,  $A$ and $B_j$ are simply connected at $\infty$ in $W$. Let $\mathcal W$ be a finite generating set for $W$ containing 
$\mathcal A$ and 
$\mathcal G$, and let $\langle \mathcal W; R\rangle$ be a finite presentation for $W$. Assume that $R$ contains a set $R'$ of $\mathcal A$-relations so that $\Gamma_{(A,\mathcal A)}(R')$ is semistable at $\infty$. We also assume that $R'$ contains the set of conjugation relations $R_1$. If $v$ is a $G$-vertex of $\Gamma_{(W,\mathcal W)}(R)$, let $v\Gamma_{(A,\mathcal A)}(R')$ be the copy of $\Gamma_{(A,\mathcal A)}(R')$ at $v$. To ease notation, if $p$ is a $\mathcal G^{\pm 1}$-word and $\bar p$ is the corresponding element of $G$, define  
$B_p\coloneqq B_{\bar p}$ and $B'_p\coloneqq B'_{\bar p}$.

As a direct consequence of Lemma \ref{close} we have:
\begin{lemma}\label{Ai2} 
Suppose $v$ is a $G$-vertex of $\Gamma_{(W,\mathcal W)}(R)$ and $(e_1,\ldots , e_i)$ labels a $\mathcal G$-edge path with consecutive vertices $v\coloneqq v_0,v_1,\ldots , v_i$. If $j\in \{1,2,\ldots ,i\}$ and $w$ is a vertex of the Cayley graph $v_j\Gamma_{(A,\mathcal A)}$ then  $w$ is within $j+N_j$ of a vertex of $v\Gamma_{(B_{j}, \mathcal B_{j})}$. 
\end{lemma}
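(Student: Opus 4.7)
The plan is to reduce Lemma \ref{Ai2} to a direct application of Lemma \ref{close} by a left translation, then bound distances using the definitions of $\mathcal B_j$ and $N_j$.

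First I would translate the situation by $v^{-1}$. Since left multiplication by $v^{-1}$ is a label-preserving isomorphism of $\Gamma_{(W,\mathcal W)}(R)$ (and of $\Gamma_{(G,\mathcal G)}$), sending $v_k \mapsto v^{-1} v_k$ for each $k$, and sending $v_j \Gamma_{(A,\mathcal A)}$ to $(v^{-1}v_j)\Gamma_{(A,\mathcal A)}$, it suffices to assume $v$ is the identity of $G$. Under that reduction, the vertex $v_j$ equals the group element $e_1 e_2 \cdots e_j$, so $|v_j| \le j$.

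Next, set $g \coloneqq v_j$ and $y \coloneqq w$. Since $w$ is a vertex of $v_j\Gamma_{(A,\mathcal A)}$, we have $y = w \in gA$. Lemma \ref{close} then yields a vertex $b \in B_g$ with $d_{\Gamma_{(G,\mathcal G)}}(w,b) \le N_g + |g|$. By the definitions $N_j = \max\{N_{g'} : |g'| \le j\}$ and $\mathcal B_j \supseteq \mathcal B_g$, we have $N_g \le N_j$, $|g| \le j$, and $B_g \le B_j$. Therefore $b \in B_j$ and $d_{\Gamma_{(G,\mathcal G)}}(w,b) \le j + N_j$. Since $\mathcal G \subseteq \mathcal W$ and $R$ contains the relators of $W$ used in building $\Gamma_{(W,\mathcal W)}(R)$, the distance in $\Gamma_{(W,\mathcal W)}(R)$ is no larger, so $w$ is within $j + N_j$ of $b$ in the ambient complex as well.

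Finally, undoing the translation, $w$ is within $j + N_j$ of the vertex $vb \in vB_j$, which is a vertex of $v\Gamma_{(B_j,\mathcal B_j)}$. There is really no obstacle here: the content is entirely carried by Lemma \ref{close}, with the only bookkeeping being (a) the translation reducing to the case $v = e$, and (b) the monotonicity $B_g \le B_j$ and $N_g \le N_j$ for $|g| \le j$. I would present it as a short paragraph rather than a structured proof.
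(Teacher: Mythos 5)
Your proof is correct and is essentially the argument the paper intends: the paper presents Lemma \ref{Ai2} as "a direct consequence of Lemma \ref{close}," and your translation to $v=e$, the observation $|v_j|\le j$, and the monotonicity $B_g\le B_j$, $N_g\le N_j$ for $|g|\le j$ are exactly the bookkeeping that makes that consequence direct. No gaps.
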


Note that Theorem \ref{SubC} does not follow directly from Lemma \ref{equiv} and the fact that $B_{\ell}$ is simply connected at $\infty$ in $W$ for all $\ell$, but Theorem \ref{SubC} does follow from the next lemma:

\begin{lemma}\label{ML} 
Given any compact subcomplex $C$ of $\Gamma_{(W,\mathcal W)}(R)$ there is a compact subcomplex $D$ of $\Gamma_{(W,\mathcal W)}(R)$ such that any $\mathcal G$-loop $\alpha$ at a $G$-vertex of $\Gamma_{(W,\mathcal W)}(R)-D$ is homotopically trivial in $\Gamma_{(W,\mathcal W)}(R) -C$. 
\end{lemma}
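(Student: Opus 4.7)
The plan is to prove Lemma \ref{ML} by induction on the length $k$ of the subcommensurated series, with the base case $k=0$ supplied by Theorem \ref{MainFG}. For the inductive step, the induction hypothesis together with the semistability results of Section 3 yield Lemma \ref{SC}: each of the finitely generated subgroups $A$ and $B_j$ of $Q_k$ is $1$-ended, semistable at $\infty$, and simply connected at $\infty$ in the finitely presented overgroup $W$. The goal is to exploit this to fill an arbitrary $\mathcal{G}$-loop $\alpha$.

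Given a $\mathcal{G}$-loop $\alpha=(e_1,\dots,e_n)$ based at a $G$-vertex $v$ with consecutive vertices $v=v_0,v_1,\dots,v_n=v$, I would first homotope $\alpha$, via a homotopy avoiding $C$, into a loop $\beta$ that lies inside the single coset $v\Gamma_{(B_n,\mathcal{B}_n)}$. The ingredients are: (a) the $R_1$-relators in $R$, which let me trade any $\mathcal{A}_s$-edge incident to a $\mathcal{G}$-neighbor of $v$ for an $s^{-1},\mathcal{H},s$ detour (as in Claim \ref{Cl3}); (b) the semistability of $A$ from Lemma \ref{SC}, applied coset-by-coset, which lets me propagate the local rewrites into a globally defined homotopy of $\alpha$; (c) the estimate from Lemma \ref{Ai2} that every vertex of each $v_j\Gamma_{(A,\mathcal{A})}$ lies within $j+N_j\le n+N_n$ of $v\Gamma_{(B_n,\mathcal{B}_n)}$; and (d) semistability of $B_n$ (Lemma \ref{SC}) applied via Theorem \ref{ssequiv}(4) to push the resulting nearby loop into an honest loop $\beta$ in the coset.

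With $\beta$ in hand, I would apply Lemma \ref{equiv} to the subgroup $B_n$ of the finitely presented group $W$ (which is simply connected at $\infty$ in $W$ by the induction hypothesis and Lemma \ref{SC}), with parameter $N=n+N_n$, to obtain a compact set $D_n(C)\subset\Gamma_{(W,\mathcal{W})}(R)$ such that any edge-path loop outside $D_n$ whose vertices lie within $N$ of (a finite list of translates of) $B_n$ is homotopically trivial in $\Gamma-C$. Translating back by $v$ then kills $\beta$, and hence $\alpha$, in $\Gamma-C$.

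The expected main obstacle is uniformity over the base vertex $v$ and the length $n$ of $\alpha$: Lemma \ref{equiv} is not automatically uniform across cosets, and as the Remark following it warns, the compact set generally depends on the chosen translate. I therefore expect the hardest work to be packaging the estimates from Lemma \ref{Ai2}, Lemma \ref{bump}, Claim \ref{Cl3} and Lemma \ref{equiv} into a single compact $D$ that grows quickly enough with $|v|$ so that every $\mathcal{G}$-loop based outside $D$ can be treated by the above scheme—essentially, $D$ must be arranged so that for any $v\notin D$ and any loop $\alpha$ at $v$ in $\Gamma-D$, the length $n$ of $\alpha$ (and the corresponding coset $v\Gamma_{(B_n,\mathcal{B}_n)}$) is forced into a regime where the translate of $D_n(C)$ attached to $v$ still sits inside $D$. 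This simultaneous metric bookkeeping of $L_1$, the $N_j$, and the star operations from Lemma \ref{bump} is where I anticipate the core difficulty.
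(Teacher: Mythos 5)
Your architecture is right (induction with base case Theorem \ref{MainFG}, the $R_1$-conjugation trick, Lemma \ref{Ai2}, semistability of $A$, and Lemma \ref{equiv} applied to $B_n$), but your proposed resolution of the difficulty you correctly identify does not work, and the paper resolves it in a different way. The compact set $D$ must be chosen before $\alpha$ is given, yet loops of arbitrary length $n$ are based at every vertex outside $D$; so no choice of $D$, however fast it ``grows with $|v|$,'' can force the auxiliary loop $\beta$ to avoid the compact set $E=D(C,n+N_n)$ coming from Lemma \ref{equiv}, since $\bigcup_n D(C,n+N_n)$ need not be compact and $B_n$ itself changes with $n$. The paper's way out is not metric bookkeeping in $D$: it builds a \emph{proper homotopy of rays} $\hat H$ rel $v$ from $s_{v_0}$ to $(\alpha, s_{v_0})$ whose image avoids only $C$ (not $E$) and whose vertices stay within $N_\ell+\ell$ of $B_\ell$, and then uses properness to take a slice $\hat H|_{[0,1]\times\{N+1\}}$ beyond $\hat H^{-1}(E)$. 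That slice is the loop $\beta$: it lies outside $E$ automatically, for free, no matter how large $E$ is. This decouples $D$ from $\ell$ entirely ($D$ depends only on $C$, via the semistability of $A$ and the $R_1$-cells); your plan of homotoping $\alpha$ directly to a loop in (or near) the coset and then checking it misses $D_n(C)$ has no mechanism for getting $\beta$ past a compact set that was chosen after $D$.

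The second facet of the same gap is the translate problem. You propose to ``translate back by $v$'' to kill $\beta$, but as the Remark after Lemma \ref{equiv} warns, the compact set produced by that lemma depends on the chosen coset representative, and there is no uniform choice over all $v\in G$. The paper sidesteps this entirely: since $G$ is $1$-ended (by Section 3), the basepoint can be moved within $\Gamma_{(G,\mathcal G)}-D$ to a vertex of $H$, so that $v\in H<A<B_\ell$ and the relevant coset $vB_\ell$ is $B_\ell$ itself; Lemma \ref{equiv} is then applied to the single subgroup $B_\ell$ with $N=\ell+N_\ell$ and no translate ever appears. Without this normalization and the proper-ray-slice argument, the scheme you describe stalls exactly at the point you flagged as the ``core difficulty.''
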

\begin{proof} If $v$ is a vertex  of $\Gamma_{(W,\mathcal W)}(R)$ and $v\Gamma _{(A,\mathcal A)}(R')\cap C=\emptyset$, then by part 5) of Theorem \ref{ssequiv}, any two $A$-rays at $v$ are properly homotopic relative to $v$ by a proper homotopy in $v\Gamma _{(A,\mathcal A)}(R')$ (and so the homotopy avoids $C$). There are only finitely many $v\Gamma _{(A,\mathcal A)}(R')$ that intersect $C$. 
Since $\Gamma_{(A,\mathcal A)}(R')$ is semistable at $\infty$ there is  a compact subcomplex $D_1$  of $\Gamma_{(W,\mathcal W)}(R)$ such that any two $\mathcal A$-edge path rays based at a $G$-vertex $v$ and with image in $\Gamma_{(W,\mathcal W)}(R)-D_1$, are properly homotopic relative to $v$ by a homotopy in $v\Gamma_{(A,\mathcal A)}(R')-C$. There are only finitely many Cayley graphs of the form $v\Gamma_{(H,\mathcal H)}$ or $v\Gamma_{(A_s,\mathcal A_s)}$ (for $v\in G$ and $s\in \mathcal S^{\pm 1}$) that intersect $D_1$. Choose a finite subcomplex $D$ of $\Gamma _{(W,\mathcal W)}(R)$ such that $D$ contains the bounded components of both $v\Gamma_{(A_s,\mathcal A_s)}-St^{L_1}(D_1)$ and $v\Gamma_{(H,\mathcal H)}-D_1$, for all $v\Gamma_{(A_s,\mathcal A_s)}$ and $v\Gamma_{(H,\mathcal H)}$ that intersect $D_1$. (Recall, if $s\in \mathcal S^{\pm 1}$ and $a\in \mathcal A_s$, then there is a $\mathcal H$-word $w(a,s)$ of length $\leq L_1$, such that $s^{-1}asw(a,s)^{-1}\in R_1$.)

\vspace {.5in}
\vbox to 2in{\vspace {-2in} \hspace {-1.75in}
\includegraphics[scale=1]{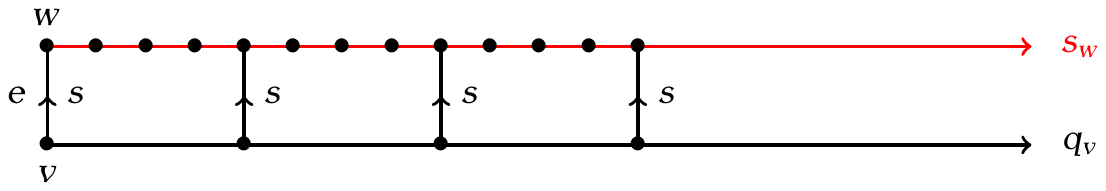}
\vss }

\vspace{-1.5in}

\centerline{Figure 2}
\vspace {.3in}

$(\ast)$ So, if $e$ is an edge of $\Gamma_{(W,\mathcal W)}(R)-D$ with initial vertex $v\in G$, terminal vertex $w$ and label $s\in \mathcal S^{\pm 1}$, then there is a proper $A_s$-ray $q_v$ at $v$ avoiding $St^{L_1}(D_1)$ and hence a $\mathcal H$-ray  $s_w$ at $w$ avoiding $D_1$, such that $q_w$  and $(e,s_v)$ are homotopic relative to $v$ by a homotopy (using only 2-cells arising from $R_1$-conjugation relations) in $\Gamma_{(W,\mathcal W)}(R)-C$ (see Figure 2) and;

$(\ast \ast)$ if $v$ is a $G$-vertex of $\Gamma_{(W,\mathcal W)}(R)-D$ then there is a $\mathcal H$-ray at $v$ in $\Gamma_{(W,\mathcal W)}(R)-D_1$.

Assume $\alpha$ is a $\mathcal G$-loop based at the $G$-vertex $v$ in $\Gamma_{(W,\mathcal W)}(R)-D$. We wish to show that $\alpha$ is homotopically trivial in $\Gamma_{W,\mathcal W)}-C$. Since $G$ is 1-ended, we may assume that $\Gamma_{(G,\mathcal G)}-D$ is connected and so there is an edge path in $\Gamma_{(G,\mathcal G)}-D$ from $v$ to a vertex of $H$. Hence we assume, without loss, that $v\in H$. 
Let $\ell$ be the length of $\alpha$.  By Lemma \ref{SC}, $B_\ell$ is simply connected at $\infty$ and Lemma \ref{equiv} implies there is a compact subcomplex $E(C, \ell+N_\ell)$ of $\Gamma _{(W,\mathcal W)}(R)$ with the following property: If $\beta$ is an  
edge path loop with image in $\Gamma _{(W,\mathcal W)}(R)-E$ and each vertex of $\beta$ is within $\ell+N_\ell$ of $B_\ell$ then $\beta$ is homotopically trivial in $\Gamma _{(W,\mathcal W)}(R)-C$. 

It is enough to show that $\alpha$ is homotopic to such a $\beta$ in $\Gamma _{(W,\mathcal W)}(R)-C$ (and this is where the semistability of $A$ comes in).
Let $s_{v}$ be a $\mathcal H$-proper edge path ray at $v$ in $\Gamma_{(W,\mathcal W)}(R)-D_1$ (see $(\ast,\ast)$). If all edges of $\alpha$ are $\mathcal H$-edges, then by the definition of $D_1$, the rays $s_{v_0}$ and $(\alpha, s_{v_0})$ are properly homotopic relative to $v$, by  $\hat H$ a proper homotopy with image in $v\Gamma_{(A,\mathcal A)}(R')-C$.   Otherwise, write $\alpha$ as $(\tau_0, e_0, \tau_1, e_1,\ldots , \tau_{j}, e_j, \tau_{j+1})$ where $\tau_i$ is a (possibly trivial) $\mathcal H$-path and $e_i$ is a $\mathcal S$-edge. Let the initial vertex of $e_i$ be $v_i'$ and the terminal vertex of $e_i$ be $ v_i$.  (See Figure 3.)

\vspace {.5in}
\vbox to 2in{\vspace {-2in} \hspace {-2.25in}
\includegraphics[scale=1]{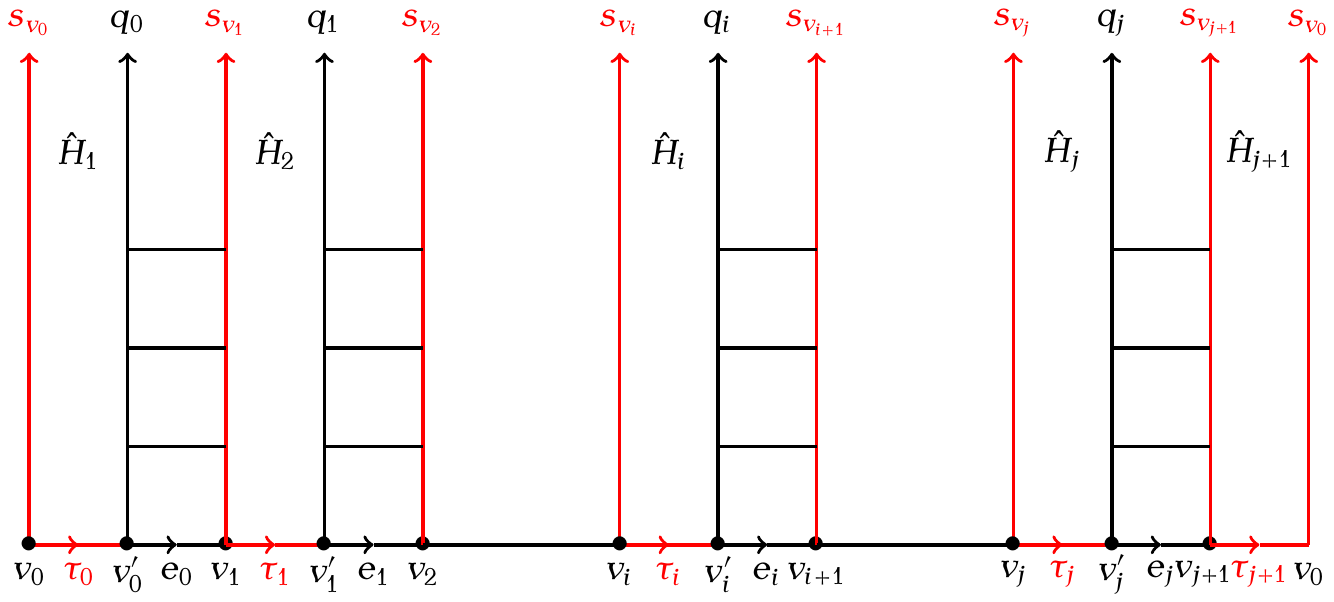}
\vss }

\vspace {.1in}

\centerline{Figure 3}
\vspace {.1in}

If $e_i$ is labeled by $s\in \mathcal S^{\pm 1}$ then let $q_i$ be a proper $\mathcal A_{s}$-ray at $v_i'$, avoiding $St^{L_1}(D_1)$. For each edge $a$ of $q_i$, there is an 2-cell with boundary label $(s^{-1}, a, s, w^{-1}(a,s))$ where $w(a,s)$ is a $\mathcal H$-word of length $\leq L_1$ (see the definition of $R_1$). So if $q_i$ is labeled $(a_1,a_2,\ldots)$ then $s_{v_i}$, the $\mathcal H$-ray at $v_i$, with labeling $(w(a_1,s), w(a_2,s), \ldots)$ 
is such that $q_i$ is properly homotopic to $(e_i,s_{v_i})$ relative to $v_i'$ by a homotopy (only using $R_1$-cells) with image in $\Gamma_{(W,\mathcal W)}(R)-C$. 
The ray $s_{v_i}$ has image in $\Gamma_{(W,\mathcal W)}(R)-D_1$ and so by the semistability of $\Gamma_{(A,\mathcal A)}(R')$ we have for $i\in \{0,1,\ldots, j\}$, $q_i$ is properly homotopic to $(\tau_i^{-1}, s_{v_{i}})$ relative to $v_i'$ by a proper homotopy $\hat H_i$ in $v_i'\Gamma_{(A,\mathcal A)}-C$. Finally, define $\hat H_{j+1}$ to be a proper homotopy in $v_0\Gamma_{(A,\mathcal A)}-C$ of $s_{v_0}$ to $(\tau_{j+1}^{-1}, s_{v_{j+1}})$.

Assume that $v_i'$ is the $j(i)^{th}$ vertex of $\alpha$. 
By Lemma \ref{Ai2}, every vertex of $v_{i}'\Gamma_{(A,\mathcal A)}(R')$ is within $N_{j(i)}+j(i) (\leq N_\ell+\ell)$ of $B_{j(i)}\subset B_\ell$.  Hence for $i\in \{0,\ldots, j\}$, each vertex of (the image of) $\hat H_i$ is within $N_\ell+\ell$ of a vertex of $B_\ell$. 

Combining the homotopies $\hat H_0,\ldots, \hat H_{j+1}$ along with the homotopies of $q_i$ to $(e_i,s_{v_{i+1}})$ we have a proper cellular homotopy $\hat H$ (relative to $v$) of $s_{v_0}$ to $(\alpha, s_{v_0})$ with image in $\Gamma_{(W,\mathcal W)}(R)-C$, and each vertex of the image of the homotopy $\hat H$ is within $N_\ell+\ell$ of a vertex of $B_\ell$. 

$$\hat H:[0,1]\times [0,\infty)\to \Gamma_{(W,\mathcal W)}(R)-C$$ 
such that $\hat H|_{[0,1]\times\{0\}}$ is $\alpha$, $\hat H|_{\{0\}\times [0,\infty)}$ and $\hat H|_{\{1\}\times [0,\infty)}$ both agree with $s_{v_0}$, and each vertex of the image of $\hat H$ is within $N_\ell+\ell$ of $ B_\ell$. Choose $N$ such that $\hat H^{-1}(E)\subset [0,1]\times [0,N]$. The loop $\hat H|_{[0,1]\times\{N+1\}}$ provides a $\mathcal G$-loop $\beta$ such that $\alpha$ is homotopic to $\beta$ in $\Gamma_{(W,\mathcal W)}(R)-C$,  and each vertex of  $\beta$ is within $N_\ell+\ell$ of $B_\ell$. By the definition of $E$, the loop $\beta$ (and hence $\alpha$) is homotopically trivial in $ \Gamma_{(W,\mathcal W)}(R)-C$.
\end{proof}

{\it Michael Mihalik, Department of Mathematics, Vanderbilt University, Nashville TN, 37240 USA.}

{\it email: michael.l.mihalik@vanderbilt.edu}
\end{document}